\documentclass[10pt,reqno]{amsart}
\usepackage{bbm}
\usepackage{mathrsfs}
\usepackage{amsfonts} 
\usepackage[dvipsnames,usenames]{color}
\usepackage{graphicx,latexsym,bm,amsmath,amssymb,verbatim,multicol,lscape}
\usepackage{enumerate}

\newtheorem{thm}{Theorem} [section]
\newtheorem{lem}{Lemma}[section]
\newtheorem{pro}{Proposition}[section]
\newtheorem{cor}{Corollary}[section]

\theoremstyle{definition}
\newtheorem{defn}{Definition}[section]
\theoremstyle{remark}

\numberwithin{equation}{section}

\allowdisplaybreaks

\begin{document}
\title[The Normal Form of Smith's Matrices]
{The Normal Form of Smith's Matrices}
\author[W.Z. Lei]{Wenzhong Lei}
\address{Mathematical College, Sichuan University,
Chengdu 610064, P.R. China}
\email{lwzh1729@163.com}

\author[H. Zhang]{Han Zhang}
\email{1901110026@pku.edu.cn}


\begin{abstract}
For any integers $x$ and $y$, let $(x,y)$ and $[x,y]$ stand for the greatest common divisor and the least common
multiple of $x$ and $y$, respectively. We denote by $|T|$ the number of elements of a finite set $T$. Let $a,b$ and $n$
be positive integers and let $S=\{x_1,...,x_n\}$ be a set of $n$ distinct positive integers. Let $(f((x_i,x_j)))$ (abbreviated by $f(S)$) and $(f([x_i,x_j]))$
 (abbreviated by $(f([S]))$) stand for the $n\times n$ matrices whose $(i,j)-$entry is $(f((x_i,x_j)))$ and $(f([x_i,x_j]))$
 respectively. In 1989, Beslin and Ligh
gave a description of the lower triangular decomposition of $((x_i,x_j))$. In 1992, Bourque and Ligh
showed that if $S$ is factor closed (i.e., S contains all positive divisors of any element of S), then the GCD matrix
$((x_i,x_j))$ divides the LCM matrix $([x_i,x_j])$ (written as $((x_i,x_j))|([x_i,x_j])$) in the ring $M_n(\mathbb{Z})$
of $n\times n$ matrices over the integers.
 In this paper, we will show the diagonalization of $((x_i,x_j))$ and its applications. Our main new contributions are Theorems 4.1 and 4.2, which extend previous results to gcd-closed sets satisfying condition $\mathcal{G}$.
\end{abstract}
\subjclass[2010]{11C20,11A25,15B36}
\keywords{diagonalization, M\H{o}bius function, factor closed}
\maketitle

\section{Introduction}
For arbitrary positive integers $x$ and $y$, we denote by $(x,y)$ the greatest common divisor of $x$ and $y$ and $[x,y]$
their least common multiple. Throughout, let $a,b$ and $n$ be positive integers and let $f$ be arithmetic function. Let
$S=\{x_1,...,x_n\}$ be a set of $n$ distinct positive integers. Let $(f((x_i,x_j)))$ (abbreviated by $f(S)$) and $(f([x_i,x_j]))$
 (abbreviated by $(f([S]))$) stand for the $n\times n$ matrices whose $(i,j)-$entry is $(f((x_i,x_j)))$ and $(f([x_i,x_j]))$
 respectively. The set $S$ is called \textbf{factor closed}(FC) if all positive divisors of any element of $S$ are in $S$, and is
 called \textbf{gcd closed} if $(x_i,x_j)\in S$ for all integers $i$ and $j$ with $1\le i,j \le n$. An FC set is clearly gcd closed
 but the converse is not true. For example, if $a>1$ is an integer, then the set $S=\{a,2a,...,na\}$ is gcd closed but not factor closed.
 In 1875, Smith \cite{[S]} published his renowned result stating that if $S$ is an FC set, then $det(f(S))=\prod_{k=1}^n(f*\mu)(x_k)$, where
 $\mu$ is the M\H{o}bius function and $f*\mu$ is the Dirichlet convolution of $f$ and $\mu$. In 1989, Beslin and Ligh \cite{[BL]}
gave a description of the lower triangular decomposition of $((x_i,x_j))$. Similar factorization results have been obtained in the works of Bourque and Ligh \cite{[BL2],[BL3]}. Inspired by this, we have the following result, which is closely related to known diagonalization theorems for GCD matrices (see, e.g., \cite{Ilmonen, Altinisik, Ovall, Bhat}).

\begin{thm}\label{thm1.1}\cite{[BL2],[BL3],Ilmonen}
Assume that $S=\{x_1,...,x_n\}$ is a factor closed set. Let $B=(b_{ij})$, where
$$
b_{ij}=
\begin{cases}
\mu(\frac{x_j}{x_i}), & \text{if } x_i|x_j;\\
0, & \text{otherwise}.
\end{cases}
$$
Then for an arbitrary function $f$, we have
$$B^T(f((x_i,x_j)))B=diag(f*\mu(x_1),f*\mu(x_2),...,f*\mu(x_n)).$$
\end{thm}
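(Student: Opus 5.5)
We follow the standard route for Smith's determinant: factor the GCD matrix through an incidence matrix, then show that $B$ inverts that incidence matrix. Put $g=f*\mu$, so that by M\"obius inversion $f(m)=\sum_{d\mid m} g(d)$ for every positive integer $m$.

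\textbf{Factorization of $f(S)$.} Let $E=(e_{ij})$ be the $n\times n$ matrix with $e_{ij}=1$ if $x_j\mid x_i$ and $e_{ij}=0$ otherwise. Let $\Lambda=\mathrm{diag}(g(x_1),\dots,g(x_n))$. Since $S$ is factor closed, every divisor $d$ of $(x_i,x_j)$ lies in $S$, say $d=x_k$. Hence
$$f((x_i,x_j))=\sum_{d\mid (x_i,x_j)} g(d)=\sum_{k:\,x_k\mid x_i,\ x_k\mid x_j} g(x_k)=\sum_{k=1}^n e_{ik}\,g(x_k)\,e_{jk},$$
that is, $f(S)=E\Lambda E^T$. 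This is the only place the factor closed hypothesis is used in the factorization.

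\textbf{The key step: $EB=I$.} We compute the $(i,j)$ entry:
$$(EB)_{ij}=\sum_{k} e_{ik}b_{kj}=\sum_{k:\ x_k\mid x_i,\ x_k\mid x_j}\mu\!\left(\frac{x_j}{x_k}\right).$$
Since $S$ is factor closed, the divisors of $(x_i,x_j)$ are exactly the $x_k$ that divide both $x_i$ and $x_j$. As $x_k$ runs over these, $x_j/x_k$ runs over the divisors $e$ of $x_j$ that are multiples of $m=x_j/(x_i,x_j)$. Writing $e=mt$ with $t\mid (x_i,x_j)$ gives
$$(EB)_{ij}=\sum_{t\mid (x_i,x_j)}\mu(mt).$$

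Two cases remain.
\begin{itemize}
\item If $x_j\mid x_i$, then $m=1$ and the sum is $\sum_{t\mid x_j}\mu(t)=[x_j=1]$. This is not $\delta_{ij}$ directly, so this route has to be checked more carefully.
\item Alternatively, compute $BE$ or $E^{-1}$ directly. This is cleaner: $E$ is the zeta matrix of the poset $(S,\mid)$, and since $S$ is factor closed, every interval $[x_j,x_i]$ of this poset is a full divisor interval of integers. The M\"obius function of the poset is therefore $\mu_S(x_j,x_i)=\mu(x_i/x_j)$. Hence $E^{-1}=(\mu(x_i/x_j)[x_j\mid x_i])$, which is exactly $B^T$.
\end{itemize}
So we will prove $B^TE=I$ directly:
$$(B^TE)_{ij}=\sum_{k:\ x_j\mid x_k\mid x_i}\mu\!\left(\frac{x_i}{x_k}\right).$$
By factor closedness, the $x_k$ with $x_j\mid x_k\mid x_i$ are exactly all such integers. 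So this equals $\sum_{d\mid x_i/x_j}\mu(d)=\delta_{ij}$ when $x_j\mid x_i$, and the sum is empty (hence $0$) otherwise.

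\textbf{Conclusion.} From $B^TE=I$ we get $E^TB=(B^TE)^T=I$. Therefore
$$B^Tf(S)B=(B^TE)\,\Lambda\,(E^TB)=\Lambda,$$
which is the claimed identity. The main obstacle is getting the index orientation right: one must check that $B^T$, not $B$, is the left inverse of $E$, and confirm that factor closedness (rather than mere gcd closedness) makes each divisor interval lie entirely inside $S$. The same fact is what makes the factorization $f(S)=E\Lambda E^T$ valid.
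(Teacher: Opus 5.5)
Your proof is correct. Its second half coincides with the paper's, but you obtain the first half by a different mechanism.

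The paper never factors $f(S)$. Instead it proves Lemma 2.1, namely $\sum_{d\mid n}\mu(n/d)f((d,m))=f*\mu(n)$ if $n\mid m$ and $0$ otherwise. The proof pairs $d$ with $d/p$ for a prime $p$ with $p^{\nu_p(n)}\nmid m$. The paper then applies this with $n=x_j$, $m=x_i$, where factor closedness makes $\sum_{x_k\mid x_j}$ a full divisor sum, and so computes $f(S)B$ directly. In your notation the result is $f(S)B=E\Lambda$. Its second step, $\sum_{x_j\mid x_k\mid x_i}\mu(x_i/x_k)=\delta_{ij}$, is exactly your identity $B^TE=I$.

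You reach $f(S)B=E\Lambda E^TB=E\Lambda$ differently. You use the Smith--Beslin--Ligh factorization $f(S)=E\Lambda E^T$, which is M\"obius inversion $f=(f*\mu)*\mathbf{1}$, together with the transposed identity $E^TB=I$. So the single fact that $B^T$ inverts the zeta matrix $E$ of the poset $(S,\mid)$ does all the work, and Lemma 2.1 with its pairing argument becomes unnecessary. As a by-product you get $B^{-1}=E^T$, which the paper proves separately as Lemma 3.1. What the paper's route buys in exchange is a standalone identity valid for arbitrary $m,n$. Its pairing proof is also the template the authors later adapt (Lemma 4.1) to the imitative M\"obius function on gcd-closed sets.

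For a clean write-up, drop your first bullet. The $EB$ computation is a dead end, since $(EB)_{jj}=\sum_{d\mid x_j}\mu(d)=0$ whenever $x_j\neq 1$. Only the computation $B^TE=I$ is needed.
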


If $(f([x_i,x_j]))=([x_i,x_j]^a)$, we can give a special normal form.

\begin{thm}\label{thm1.2}
Assume that $S=\{x_1,...,x_n\}$ is a factor closed set. Let $[B^a]=(b^a_{ij})_{1\le i,j\le n}$, where
$$
b^a_{ij}=
\begin{cases}
\mu(\frac{x_j}{x_i})(\frac{x_j}{x_i})^a, & \text{if } x_i|x_j;\\
0, & \text{otherwise}.
\end{cases}
$$
Then we have
$$[B^a]^T([x_i,x_j]^a)[B_a]=diag(1,\prod_{p|x_2}(p^{a\nu_p(x_2)}-p^{a(\nu_p(x_2)+1)}),...,\prod_{p|x_n}(p^{a\nu_p(x_n)}-p^{a(\nu_p(x_n)+1)})),$$
 where, without loss of generality, we assume that $x_1=1$.
\end{thm}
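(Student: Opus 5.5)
The plan is to reduce the statement to Theorem \ref{thm1.1} by a diagonal similarity, using the identity $[x_i,x_j]=x_ix_j/(x_i,x_j)$. Let $D=\mathrm{diag}(x_1^a,\dots,x_n^a)$ and let $g$ be the arithmetic function $g(m)=m^{-a}$, which takes rational values; this is harmless since Theorem \ref{thm1.1} holds for an arbitrary function $f$. Then
$$([x_i,x_j]^a)=\Bigl(x_i^a\,\frac{1}{(x_i,x_j)^a}\,x_j^a\Bigr)=D\,(g((x_i,x_j)))\,D .$$
I read the $[B_a]$ in the statement as a typo for $[B^a]$. With $B$ as in Theorem \ref{thm1.1}, the $(i,j)$ entry of $D^{-1}BD$ is $x_i^{-a}b_{ij}x_j^{a}$. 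This equals $\mu(x_j/x_i)(x_j/x_i)^a$ when $x_i\mid x_j$ and $0$ otherwise, so $[B^a]=D^{-1}BD$ and $[B^a]^T=DB^TD^{-1}$.

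Substituting, the inner factors $D^{-1}D$ cancel:
$$[B^a]^T([x_i,x_j]^a)[B^a]=DB^TD^{-1}\,D\,g(S)\,D\,D^{-1}BD=D\bigl(B^Tg(S)B\bigr)D .$$
By Theorem \ref{thm1.1}, $B^Tg(S)B=\mathrm{diag}((g*\mu)(x_1),\dots,(g*\mu)(x_n))$. Hence the product is diagonal with $k$-th entry $x_k^{2a}(g*\mu)(x_k)$.

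It remains to evaluate $x^{2a}(g*\mu)(x)$, which is the only computational step. Since $g$ and $\mu$ are multiplicative, so is $g*\mu$, and so is $x\mapsto x^{2a}(g*\mu)(x)$. It therefore suffices to compute at a prime power $p^e$ with $e\ge1$. There $(g*\mu)(p^e)=p^{-ae}-p^{-a(e-1)}$, because only the divisors $p^e$ and $p^{e-1}$ contribute. Multiplying by $p^{2ae}$ gives $p^{ae}-p^{a(e+1)}$.

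Taking the product over $p\mid x_k$ with $e=\nu_p(x_k)$ yields exactly $\prod_{p\mid x_k}\bigl(p^{a\nu_p(x_k)}-p^{a(\nu_p(x_k)+1)}\bigr)$. For $x_1=1$, which lies in $S$ because $S$ is factor closed, the entry is $(g*\mu)(1)=1$ (the empty product). This gives the stated diagonal matrix.

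I expect no real obstacle. The only points needing care are recognizing the conjugation $[B^a]=D^{-1}BD$ and checking that Theorem \ref{thm1.1} applies to the non-integer-valued function $g$.
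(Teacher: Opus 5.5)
Your argument is correct, and it is organized differently from the paper's. The paper does not use Theorem \ref{thm1.1} as a black box. Instead it computes $([x_i,x_j]^a)[B^a]$ entrywise and pulls the factor $(x_ix_j)^a$ out of each entry. It then evaluates the remaining sum $\sum_{x_k\mid x_j}\mu(x_j/x_k)(x_i,x_k)^{-a}$ by Corollary \ref{cor2.1} with exponent $-a$, getting $x_i^a\prod_{p\mid x_j}(1-p^a)$ if $x_j\mid x_i$ and $0$ otherwise. Finally it multiplies by $[B^a]^T$ and uses $\sum_{x_j\mid x_k\mid x_i}\mu(x_i/x_k)=\delta_{ij}$. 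In other words, it redoes the two-step computation from the proof of Theorem \ref{thm1.1}. Your identities $[B^a]=D^{-1}BD$ and $([x_i,x_j]^a)=D\,(g((x_i,x_j)))\,D$ are the matrix-level form of that factor extraction. The arithmetic input is therefore the same: Lemma \ref{lem2.1} for $m\mapsto m^{-a}$, used with rational values in both arguments. What your packaging buys is that Theorem \ref{thm1.2} becomes a formal corollary of Theorem \ref{thm1.1}, with only the prime-power evaluation of $x^{2a}(g*\mu)(x)$ left to do. It also explains the extension (3.3) at once. For completely multiplicative $f$ nonvanishing on $S$, one has $f([x,y])f((x,y))=f(x)f(y)$ and $[B_{f^a}]=D^{-1}BD$ with $D=\mathrm{diag}(f(x_1)^a,\dots,f(x_n)^a)$. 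So the same conjugation with $g=f^{-a}$ applies, whereas the paper only says to ``replace integers by their $f$-values.'' The paper's direct route, in turn, exhibits the intermediate product $([x_i,x_j]^a)[B^a]$ explicitly.
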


In 2008, Hong \cite{[H]} proved that $((x_i,x_j)^a)|((x_i,x_j)^b)$, $((x_i,x_j)^a)|([x_i,x_j]^b)$, and $\quad$
$([x_i,x_j]^a)|([x_i,x_j]^b)$
hold in the ring $M_n(\mathbb{Z})$ when $a|b$ and $S$ is a divisor chain (namely, there is a permutation $\sigma$ of order $n$ such that
$x_{\sigma(1)}|...|x_{\sigma(n)}$). We can establish the result again and extend it by Theorem \ref{thm1.1} and Theorem \ref{thm1.2}, and the result with different method is in \cite{[H3]}.

Moreover, when $f$ is (completely) multiplicative, we have the general case of  Theorem \ref{thm1.1} and Theorem \ref{thm1.2}, which will be given in Section 3.

Furthermore, if $S$ is gcd closed, the problem may be much more complicated. However, we can add some conditions to obtain a set similar to an FC set, and inspired by Theorem \ref{thm1.1} and Theorem \ref{thm1.2}, we will consider the similar divisibility problems by using imitative M\H{o}bius function, which is defined in Section 4.

This paper is organized as follows. In Section 2, we prove Theorem \ref{thm1.1} and Theorem \ref{thm1.2}. In Section 3, we extend
the result of \cite{[H]} by Theorem \ref{thm1.1} and Theorem \ref{thm1.2}. In Section 4, we consider a special generalization of FC
 set, which is a gcd closed set.

\section{The Proof of Theorem \ref{thm1.1} and Theorem \ref{thm1.2}}

In this section, first we prove a lemma and by using the lemma, we can finish the proof of  Theorem \ref{thm1.1} and Theorem \ref{thm1.2}.
 we denote by $\nu_p(x)$ \textbf{the p-adic valuation of} $x$, i.e. the largest nonegative integer $r$ such that $p^r$ divides $x$ for
  any integer $x$. One writes $p^l||x$ if $l=\nu_p(x)$.

\begin{lem}\label{lem2.1}
For $m,n\in\mathbb{Z}_+$,
$$
\sum_{d|n}\mu(\frac{n}{d})f((d,m))=
\begin{cases}
f*\mu(n), & \text{if } n|m,\\
0, & \text{if } n\nmid m.
\end{cases}
$$
\end{lem}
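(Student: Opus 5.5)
The plan is to expand $f$ through its Möbius-transformed function and exchange the order of summation. Put $g=f*\mu$. Möbius inversion gives $f(k)=\sum_{e\mid k} g(e)$ for every positive integer $k$. Since $e\mid (d,m)$ is equivalent to $e\mid d$ and $e\mid m$, the left-hand side becomes
\begin{align*}
\sum_{d\mid n}\mu\Big(\frac{n}{d}\Big)\sum_{\substack{e\mid d\\ e\mid m}} g(e)
=\sum_{\substack{e\mid n\\ e\mid m}} g(e)\sum_{\substack{d:\ e\mid d\mid n}}\mu\Big(\frac{n}{d}\Big).
\end{align*}

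Next I evaluate the inner sum. Write $d=ek$ with $k\mid n/e$. As $k$ runs over the divisors of $n/e$, so does $\frac{n}{d}=\frac{n/e}{k}$, and therefore
\begin{align*}
\sum_{k\mid n/e}\mu\Big(\frac{n/e}{k}\Big)=\sum_{t\mid n/e}\mu(t).
\end{align*}
By the fundamental property of the Möbius function, this equals $1$ if $n/e=1$ and $0$ otherwise. So the only term that survives is $e=n$, and only when $n$ is admissible in the outer sum, i.e.\ when $n\mid m$.

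This gives the two cases directly. If $n\mid m$, the left-hand side is $g(n)=f*\mu(n)$. If $n\nmid m$, no term survives and the sum is $0$.

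The only step that needs care is the interchange of summation and the reindexing of the inner sum, so that it visibly reduces to $\sum_{t\mid n/e}\mu(t)$. Everything else is Möbius inversion, and no arithmetic assumption on $f$ is needed.
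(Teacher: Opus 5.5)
Your proof is correct, but it takes a different route from the paper.

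The paper splits into cases. For $n\mid m$ it uses $(d,m)=d$ and reads off $f*\mu(n)$. For $n\nmid m$ it fixes a prime $p$ with $p^{\nu_p(n)}\nmid m$ and cancels terms in pairs $d\leftrightarrow d/p$ with $\nu_p(d)=\nu_p(n)$, using $(d,m)=(d/p,m)$ and $\mu(n/d)+\mu(np/d)=\mu(n/d)(1+\mu(p))=0$. The unpaired terms, with $\nu_p(d)\le\nu_p(n)-2$, vanish because $p^2\mid n/d$; the paper leaves this implicit.

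You instead expand $f=(f*\mu)*1$ and swap the sums, so each common divisor $e$ of $n$ and $m$ contributes $(f*\mu)(e)\sum_{t\mid n/e}\mu(t)$. This treats both cases at once and needs no choice of prime or bookkeeping of unpaired terms. It also makes visible the identity $f((x,y))=\sum_{e\mid x,\,e\mid y}(f*\mu)(e)$. For factor closed $S$ this is exactly the factorization $f(S)=(B^{-1})^{T}\,\mathrm{diag}\bigl(f*\mu(x_1),\dots,f*\mu(x_n)\bigr)\,B^{-1}$ underlying Theorem~\ref{thm1.1}, where $B^{-1}$ is the incidence matrix of Lemma~\ref{lem3.1}.

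The paper's sign-reversing pairing is more hands-on and never rewrites $f$. Its advantage in context is that it serves as the template reused in Lemma~\ref{lem4.1}, where $d$ is paired with $\frac{x}{y_1}d$ for the imitative M\"obius function $\mu_x$.
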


\begin{proof}
First, when $n|m$, we have $(d,m)=d$, and then
$$\sum_{d|n}\mu(\frac{n}{d})f((d,m))=\sum_{d|n}\mu(\frac{n}{d})f(d)=f*\mu(n).$$

Now if $n\nmid m$, then there exists a prime $p$ such that $p|n$ and $p^{\nu_p(n)}\nmid m$. Moreover, for $p^{\nu_p(n)}||d$,
we have $(d,m)=(\frac{d}{p},m)$, and
$$\mu(\frac{n}{d})f((d,m))+\mu(\frac{np}{d})f((\frac{d}{p},m))=\mu(\frac{n}{d})f((d,m))(1+\mu(p))=0,$$
where clearly we have $(p,\frac{n}{d})=1$.

Therefore,
$$
\sum_{d|n}\mu(\frac{n}{d})f((d,m))=
\sum_{d|n,p^{\nu_p(n)}||d}\left(\mu(\frac{n}{d})f((d,m))+\mu(\frac{np}{d})f((\frac{d}{p},m))\right)=0.
$$

Thus, Lemma \ref{lem2.1} is proved.
\end{proof}

The next corollary is natural.

\begin{cor}\label{cor2.1}
When $f(x)=x^k$, then
$$
\sum_{d|n}\mu(\frac{n}{d})f((d,m))=
\begin{cases}
\prod_{p|n}(p^{k\nu_p(n)}-p^{k(\nu_p(n)-1)}), & \text{if } n|m,\\
0, & \text{if } n\nmid m,
\end{cases}
$$
where $p$ is a prime.
\end{cor}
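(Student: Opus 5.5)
The plan is to specialize Lemma \ref{lem2.1} to $f(x)=x^k$. The case $n\nmid m$ then needs nothing further: the lemma gives $0$ for every arithmetic function $f$, in particular for $x\mapsto x^k$. So everything reduces to evaluating the Dirichlet convolution $f*\mu(n)$ with $f(x)=x^k$, in the case $n\mid m$.

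For that evaluation we use multiplicativity. The power function $f(x)=x^k$ is completely multiplicative, hence multiplicative, and $\mu$ is multiplicative. The Dirichlet convolution of two multiplicative functions is again multiplicative. Therefore
$$f*\mu(n)=\prod_{p\mid n}f*\mu\bigl(p^{\nu_p(n)}\bigr).$$

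It remains to compute a single prime-power factor. Write $e=\nu_p(n)\ge 1$. The divisors of $p^e$ are $p^j$ for $0\le j\le e$, and $\mu(p^{e-j})$ vanishes unless $e-j\in\{0,1\}$. Only two terms survive:
$$f*\mu(p^e)=\sum_{j=0}^{e}\mu(p^{e-j})p^{jk}=p^{ke}-p^{k(e-1)}.$$

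Taking the product of these factors over all primes $p\mid n$ gives exactly the stated formula in the case $n\mid m$. Together with the $n\nmid m$ case from Lemma \ref{lem2.1}, this proves the corollary.

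I do not expect a genuine obstacle here. The only point that needs a cited fact rather than a computation is that Dirichlet convolution preserves multiplicativity. If one prefers to avoid citing it, the factorization can be checked directly. Write $n=\prod p^{e_p}$. Each divisor $d$ of $n$ factors uniquely as $\prod p^{j_p}$ with $0\le j_p\le e_p$. Both $\mu(n/d)$ and $d^k$ split into products over $p$, so the sum $\sum_{d\mid n}\mu(n/d)d^k$ factors as the product over $p$ of the prime-power sums computed above.
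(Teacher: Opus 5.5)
Your proposal is correct and is essentially the paper's argument: both use Lemma \ref{lem2.1} to reduce to evaluating $f*\mu(n)=\sum_{d\mid n}\mu(n/d)d^k$, and both then factor this sum over the primes dividing $n$. The paper does the factorization by expanding over the squarefree values of $n/d$ and regrouping into $\prod_{i}\bigl(p_i^{k\nu_{p_i}(n)}+\mu(p_i)p_i^{k(\nu_{p_i}(n)-1)}\bigr)$, which is the same as the direct check you sketch as an alternative to citing multiplicativity of Dirichlet convolution.
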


\begin{proof}
Write $n=p_1^{\nu_{p_1}(n)}p_2^{\nu_{p_2}(n)}...p_e^{\nu_{p_e}(n)}$.

By Lemma \ref{lem2.1}, we only need to compute
$$\sum_{d|n}\mu(\frac{n}{d})d^k=n^k+\sum_{i=1}^e\mu(p_i)(\frac{n}{p_i})^k+...+\mu(\prod_{i=1}^ep_i)(\frac{n}{\prod_{i=1}^ep_i})^k$$
$$=\prod_{i=1}^e(p_i^{k\nu_{p_i}(n)}+\mu(p_i)p_i^{k(\nu_{p_i}(n)-1)})=\prod_{i=1}^e(p_i^{k\nu_{p_i}(n)}-p_i^{k(\nu_{p_i}(n)-1)}),$$
and the corollary is proved.
\end{proof}

Now we can prove Theorem \ref{thm1.1} and Theorem \ref{thm1.2}.

\begin{proof}
First, consider
$$(f((x_i,x_j)))B=(a_{ij}),$$
where
$$a_{ij}=\sum_{x_k|x_j} f((x_i,x_k))\mu(\frac{x_j}{x_k}).$$
By Lemma \ref{lem2.1},
$$
\sum_{x_k|x_j} f((x_i,x_k))\mu(\frac{x_j}{x_k})=
\begin{cases}
f*\mu(x_j), & \text{if } x_j|x_i,\\
0, & \text{otherwise}.
\end{cases}
$$

Now we have
$$B^T(f((x_i,x_j)))B=(a'_{ij}),$$
where
$$a'_{ij}=\sum_{x_j|x_k|x_i}\mu(\frac{x_i}{x_k})a_{kj}=\sum_{x_j|x_k|x_i}\mu(\frac{x_i}{x_k})(f*\mu(x_j))$$
$$
=\begin{cases}
f*\mu(x_j), & \text{if } i=j,\\
0, & \text{otherwise}.
\end{cases}
$$

Thus,
$$B^T(f((x_i,x_j)))B=diag(f*\mu(x_1),f*\mu(x_2),...,f*\mu(x_n)).$$

Next, we calculate $[B^a]^T([x_i,x_j]^a)[B_a]$.

Let $([x_i,x_j]^a)[B_a]=(h^a_{ij})$, where
$$h^a_{ij}=\sum_{x_k|x_j}[x_i,x_k]^a\mu(\frac{x_j}{x_k})(\frac{x_j}{x_k})^a
=\sum_{x_k|x_j}\left(\frac{x_ix_k}{(x_i,x_k)}\right)^a\mu(\frac{x_j}{x_k})(\frac{x_j}{x_k})^a$$
$$=(x_ix_j)^a\sum_{x_k|x_j}\mu(\frac{x_j}{x_k})(x_i,x_k)^{-a}.$$
By Corollary \ref{cor2.1},
$$
\sum_{x_k|x_j}\mu(\frac{x_j}{x_k})(x_i,x_k)^{-a}
=\begin{cases}
\prod_{p|x_j}(p^{-a\nu_p(x_j)}-p^{-a(\nu_p(x_j)-1)}), & \text{if } x_j|x_i,\\
0, & \text{otherwise}.
\end{cases}
$$

Therefore,
$$
h^a_{ij}=\begin{cases}
(x_ix_j)^a\prod_{p|x_j}(p^{-a\nu_p(x_j)}-p^{-a(\nu_p(x_j)-1)})=x_i^a\prod_{p|x_j}(1-p^a), & \text{if } x_j|x_i,\\
0, & \text{otherwise}.
\end{cases}
$$

Now we have
$$[B_a]^T([x_i,x_j]^a)[B_a]=(h'^a_{ij}),$$
where
$$
h'^a_{ij}=\sum_{x_j|x_k|x_i}\mu(\frac{x_i}{x_k})(\frac{x_i}{x_k})^ah^a_{ij}=\sum_{x_j|x_k|x_i}\mu(\frac{x_i}{x_k})(\frac{x_i}{x_k})^ax_k^a\prod_{p|x_j}(1-p^a)
$$$$=x_i^a\prod_{p|x_j}(1-p^a)\sum_{x_j|x_k|x_i}\mu(\frac{x_i}{x_k})$$
$$
=\begin{cases}
x_j^a\prod_{p|x_j}(1-p^a)=\prod_{p|x_j}(p^{a\nu_p(x_j)}-p^{a(\nu_p(x_j)+1)})), & \text{if } i=j,\\
0, & \text{otherwise}.
\end{cases}
$$

Hence,
$$[B^a]^T([x_i,x_j]^a)[B_a]=diag(1,\prod_{p|x_2}(p^{a\nu_p(x_2)}-p^{a(\nu_p(x_2)+1)}),...,\prod_{p|x_n}(p^{a\nu_p(x_n)}-p^{a(\nu_p(x_n)+1)})).$$

This completes the proof of Theorem \ref{thm1.1} and Theorem \ref{thm1.2}.
\end{proof}

\section{The Application in Divisibility Properties}
In this section, we use Theorem \ref{thm1.1} and Theorem \ref{thm1.2} to consider the divisibility properties
of power GCD matrices and power LCM matrices. For $A,B\in M_n(\mathbb{Z})$, we say that $A|B$, if there exists a matrix $C\in M_n(\mathbb{Z})$ such that
$B=AC$ or $B=CA$. We can find the same result with different method in \cite{[H3]}.

\begin{thm}\label{thm3.1}
Let $S=\{x_1,...,x_n\}$ be a factor closed set, and let $a,b\in\mathbb{Z}_+$ with $a|b$. Then we have
$((x_i,x_j)^a)|((x_i,x_j)^b)$, $((x_i,x_j)^a)|([x_i,x_j]^b)$ and $([x_i,x_j]^a)|([x_i,x_j]^b)$.
\end{thm}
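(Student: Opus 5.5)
The plan is to turn each divisibility statement into a statement about diagonal matrices, using the congruence diagonalizations of Theorem \ref{thm1.1} and Theorem \ref{thm1.2}. Two preliminary observations come first.

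First, order $S$ so that $x_i|x_j$ implies $i\le j$. Then $B$ and $[B^a]$ are upper triangular with diagonal entries $\mu(1)=1$. Hence both are unimodular, so $B^{-1},[B^a]^{-1}\in M_n(\mathbb{Z})$.

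Second, the diagonal entries never vanish. With $f(x)=x^a$, Theorem \ref{thm1.1} gives
$$G_a:=((x_i,x_j)^a)=B^{-T}D_aB^{-1},\qquad D_a=\mathrm{diag}(J_a(x_k)),\qquad J_a(x)=\prod_{p^e\| x}p^{a(e-1)}(p^a-1),$$
using Corollary \ref{cor2.1}. Likewise Theorem \ref{thm1.2} gives
$$L_a:=([x_i,x_j]^a)=[B^a]^{-T}E_a[B^a]^{-1},\qquad E_a=\mathrm{diag}\Bigl(x_k^a\prod_{p|x_k}(1-p^a)\Bigr).$$

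\textbf{GCD case, $G_a|G_b$.} Write
$$G_b=G_a\cdot\bigl(B\,D_a^{-1}D_b\,B^{-1}\bigr).$$
It then suffices that $J_a(x)\mid J_b(x)$ when $a|b$. This holds factor by factor: $p^{a(e-1)}\mid p^{b(e-1)}$, and $p^a-1\mid p^b-1$ because $a|b$. So $D_a^{-1}D_b$ is an integral diagonal matrix, and the cofactor is integral.

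\textbf{LCM cases.} Here the diagonalizing matrices $[B^a]$ and $[B^b]$ differ, so the same trick is not immediate. I would first record the identity
$$[B^a]=\Delta_a B\Delta_a^{-1},\qquad \Delta_a=\mathrm{diag}(x_k^a).$$
I would also use $[x_i,x_j]^b=x_i^bx_j^b(x_i,x_j)^{-b}$, which gives $L_b=\Delta_b\,G_{-b}\,\Delta_b$. Here $G_{-b}$ is the matrix for $f(x)=x^{-b}$, which $B$ also diagonalizes by Theorem \ref{thm1.1}.

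For $G_a|L_b$, take the cofactor
$$C=G_a^{-1}L_b=B\,D_a^{-1}\,(B^T L_b B)\,B^{-1}.$$
It then suffices to show that every entry of $B^TL_bB$ in row $i$ is divisible by $J_a(x_i)$. I would compute $B^TL_bB$ entrywise by applying Lemma \ref{lem2.1} twice, as in the proof of Theorem \ref{thm1.2}. The entries should be Möbius-type sums whose $p$-parts contain factors $p^b-1$ and powers of $p$ at least $p^{a(e-1)}$, and hence are divisible by $J_a(x_i)$.

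For $L_a|L_b$, I would similarly look at $[B^a]^TL_b[B^a]$. By the identity above this equals
$$\Delta_a^{-1}B^T\Delta_a L_b\Delta_a B\Delta_a^{-1},$$
and the task is to show its rows are divisible by the diagonal entries of $E_a$.

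The main obstacle is this last step, the integrality of the cofactor in the two LCM cases. The conjugating matrices no longer cancel, so the required divisibility is not just a comparison of diagonal entries. I expect to need an explicit multiplicative evaluation of the double Möbius sums, done prime by prime since $S$ is factor closed and the sums factor over the prime powers $p^e\|x_i$. That evaluation must then be matched against $\prod_p p^{a e}(1-p^a)$ using $p^a-1\mid p^b-1$. If the two-sided product proves awkward, I would instead try the one-sided form with the cofactor on the other side, $L_b=C\,L_a$, and check which of the two orders gives a triangular, manifestly integral cofactor.
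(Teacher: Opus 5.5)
Your argument for $((x_i,x_j)^a)\mid((x_i,x_j)^b)$ is correct and is the paper's argument: the cofactor is $BD_a^{-1}D_bB^{-1}$, which is integral since $J_a(x)\mid J_b(x)$.

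\textbf{The gap.} The two LCM statements are not proved. You reduce them to the claim that row $i$ of $B^TL_bB$ is divisible by $J_a(x_i)$ (resp.\ row $i$ of $[B^a]^TL_b[B^a]$ by the $i$th entry of $E_a$). You then only assert that the entries ``should'' have the right shape, but that claim is the entire content of these two cases. The tool you name for it also does not work as stated. In the proof of Theorem~\ref{thm1.2}, Lemma~\ref{lem2.1} applies only because the weight $(x_j/x_k)^a$ in $[B^a]$ cancels the factor $x_k^a$ of $[x_i,x_k]^a$, leaving $\sum_{x_k\mid x_j}\mu(x_j/x_k)(x_i,x_k)^{-a}$. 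With $B$ and exponent $b$ (or $[B^a]$ and exponent $b\ne a$) you are instead left with $\sum_{x_l\mid x_j}\mu(x_j/x_l)\,x_l^{c}\,(x_k,x_l)^{-b}$, where $c=b$ (resp.\ $c=b-a$) is nonzero. This depends on $x_l$ not only through $(x_k,x_l)$, so it is not of the form in Lemma~\ref{lem2.1}. Finally, your conjugation identity is inverted: since $[B^a]_{ij}=x_i^{-a}\mu(x_j/x_i)x_j^{a}$, one has $[B^a]=\Delta_a^{-1}B\Delta_a$.

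\textbf{How to close it.} The fallback you mention at the end, evaluating prime by prime, does work: for factor closed $S$ the double sum defining each entry factors over primes. Write $\alpha=\nu_p(x_i)$ and $\beta=\nu_p(x_j)$. The local factor of the $(i,j)$ entry of $B^TL_bB$ is:
\begin{itemize}
\item $1$ if $\alpha=\beta=0$;
\item $J_b(p^{\max(\alpha,\beta)})$ if exactly one of $\alpha,\beta$ is $0$;
\item $-J_b(p^{\alpha})$ if $\alpha=\beta\ge1$;
\item $0$ if $\alpha\ne\beta$ are both positive.
\end{itemize}
Hence every entry is $0$ or $\pm J_b([x_i,x_j])$, which is divisible by $J_b(x_i)$ and so by $J_a(x_i)$.

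For $[B^a]^TL_b[B^a]$, the local factors at a prime $p\mid x_i$ are:
\begin{itemize}
\item $p^{b(\alpha-1)}(p^b-p^a)$ if $\beta=0$;
\item $(1-p^a)p^{b(\max(\alpha,\beta)-1)}(p^b-p^a)$ if $\alpha\ne\beta$ are both positive;
\item $p^{b(\alpha-1)}(p^b-2p^{a+b}+p^{2a})$ if $\alpha=\beta$.
\end{itemize}
Each is divisible by $p^{a\alpha}(p^a-1)$ because $a\mid b$. For the first case use $p^a-1\mid p^{b-a}-1$; for the last use $p^b-2p^{a+b}+p^{2a}\equiv0\pmod{p^a-1}$, with $p$-adic valuation at least $a$.

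\textbf{The paper's route.} The paper avoids double sums altogether. It proves $G_a\mid L_a$ and $L_a\mid L_b$, and chains them to get $G_a\mid L_b$. For example, it writes $G_a^{-1}L_a=B\,(D_a^{-1}B^T[B^a]^{-T}E_a)\,[B^a]^{-1}$. Here $B^T[B^a]^{-T}$ is lower triangular, and each entry is a single M\"obius sum over $x_j\mid x_k\mid x_i$, equal to $J_a(x_i/x_j)$ by Corollary~\ref{cor2.1}. Integrality then reduces to comparing explicit products.
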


By Theorem \ref{thm1.1} and Corollary \ref{cor2.1}, we have

\begin{cor}\label{cor3.1}
Assume that $S=\{x_1,...,x_n\}$ is a factor closed set, and define $B\in M_n(\mathbb{Z})$
as in Theorem \ref{thm1.1}. Then
$$B^T((x_i,x_j)^a)B=diag(1,\prod_{p|x_2}(p^{a\nu_p(x_2)}-p^{a(\nu_p(x_2)-1)}),...,\prod_{p|x_n}(p^{a\nu_p(x_n)}-p^{a(\nu_p(x_n)-1)})).$$
\end{cor}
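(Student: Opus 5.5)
The plan is to apply Theorem \ref{thm1.1} with the arithmetic function $f(x)=x^a$. That gives
$$B^T((x_i,x_j)^a)B=\mathrm{diag}(f*\mu(x_1),\dots,f*\mu(x_n)),$$
so the only remaining task is to evaluate $f*\mu(x_k)=\sum_{d\mid x_k}\mu(x_k/d)\,d^a$ for each $k$.

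For this evaluation I will use Corollary \ref{cor2.1} with $k=a$, $n=x_k$ and $m=x_k$. The divisibility condition $x_k\mid x_k$ trivially holds, and in that case the sum in Lemma \ref{lem2.1} reduces to $\sum_{d\mid x_k}\mu(x_k/d)d^a$, which is exactly $f*\mu(x_k)$. The corollary then gives
$$f*\mu(x_k)=\prod_{p\mid x_k}\bigl(p^{a\nu_p(x_k)}-p^{a(\nu_p(x_k)-1)}\bigr).$$
Equivalently, this is the multiplicativity of $f*\mu$ (Jordan's totient $J_a$) evaluated on prime powers, where $p^{ar}-p^{a(r-1)}$ comes from the divisors $p^r$ and $p^{r-1}$.

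It remains to justify the first diagonal entry being $1$. As in Theorem \ref{thm1.2}, I order $S$ so that $x_1=1$, which is possible because a nonempty factor closed set contains $1$. Then $f*\mu(1)=f(1)\mu(1)=1$, consistent with the empty product.

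The whole argument is a direct substitution. The only point that needs care is confirming that the hypotheses of Theorem \ref{thm1.1} (factor closedness and $B$ defined as there) are exactly those of the corollary, and that the normalization $x_1=1$ is a harmless reindexing. That reindexing permutes rows and columns of $B$ and of the GCD matrix consistently, so the identity is unaffected.
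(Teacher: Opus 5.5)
Your proposal is correct and is exactly the paper's argument: the paper obtains this corollary in one line by applying Theorem \ref{thm1.1} with $f(x)=x^a$ and evaluating $f*\mu(x_k)$ via Corollary \ref{cor2.1}. Your remark about normalizing so that $x_1=1$, which makes the first entry the empty product $1$, spells out the convention the paper silently borrows from Theorem \ref{thm1.2}.
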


We need to compute $B^{-1}$ and $[B_a]^{-1}$ first.

\begin{lem}\label{lem3.1}
The inverse of $B$ is $B^{-1}=(u_{ij})$, where
$$
u_{ij}=\begin{cases}
1, & \text{if } x_i|x_j,\\
0, & \text{otherwise}.
\end{cases}
$$
The inverse of $[B_a]$ is $[B_a]^{-1}=(u_{a,ij})$, where
$$u_{a,ij}=\begin{cases}
(\frac{x_j}{x_i})^a, & \text{if } x_i|x_j,\\
0, & \text{otherwise}.
\end{cases}$$
\end{lem}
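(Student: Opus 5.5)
The plan is a direct verification. Let $U=(u_{ij})$ and $U_a=(u_{a,ij})$ be the proposed inverses, and check that $UB=I$ and $U_a[B_a]=I$. Since all four matrices are square over $\mathbb{Q}$, a one-sided inverse is automatically two-sided, so this suffices. The only tool needed is the basic M\"obius identity $\sum_{d|m}\mu(d)=1$ if $m=1$ and $0$ otherwise. We use it in the divisor-interval form
$$\sum_{x_i|x_k|x_j}\mu\Big(\frac{x_j}{x_k}\Big)=\begin{cases}1,& x_i=x_j,\\ 0,& \text{otherwise,}\end{cases}\qquad (x_i|x_j).$$

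\textbf{Step 1: $UB=I$.} The $(i,j)$-entry of $UB$ is $\sum_k u_{ik}b_{kj}$. A term is nonzero only when $x_i|x_k$ and $x_k|x_j$, so the entry is $\sum_{x_i|x_k|x_j,\,x_k\in S}\mu(x_j/x_k)$, and it vanishes unless $x_i|x_j$.

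When $x_i|x_j$, factor closedness of $S$ is what makes the argument work. Every $d$ with $x_i|d|x_j$ is a divisor of $x_j$ and hence lies in $S$. So the index $k$ runs over all $x_k=x_i e$ with $e\,|\,(x_j/x_i)$. Writing $x_j/x_k=(x_j/x_i)/e$, the sum becomes $\sum_{e|m}\mu(m/e)$ with $m=x_j/x_i$. This equals $1$ if $m=1$, i.e.\ $i=j$ (the $x_i$ are distinct), and $0$ otherwise. Hence $UB=I$.

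\textbf{Step 2: $U_a[B_a]=I$.} The $(i,j)$-entry is
$$\sum_{x_i|x_k|x_j}\Big(\frac{x_k}{x_i}\Big)^a\mu\Big(\frac{x_j}{x_k}\Big)\Big(\frac{x_j}{x_k}\Big)^a=\Big(\frac{x_j}{x_i}\Big)^a\sum_{x_i|x_k|x_j}\mu\Big(\frac{x_j}{x_k}\Big),$$
because the powers telescope. By the computation in Step 1, the remaining sum is $\delta_{ij}$, and when $i=j$ the prefactor equals $1$. So $U_a[B_a]=I$.

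An alternative view is $[B_a]=D^{-a}BD^{a}$ with $D=\mathrm{diag}(x_1,\dots,x_n)$, which gives $[B_a]^{-1}=D^{-a}UD^{a}$, matching $u_{a,ij}$. The main (and only mild) obstacle is Step 1's reindexing, which relies on $S$ containing the whole divisor interval between $x_i$ and $x_j$. That is precisely where factor closedness enters, and it should be stated explicitly there.
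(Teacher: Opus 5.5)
Your proposal is correct and follows the paper's proof. Both arguments verify the inverse entry by entry and reduce to $\sum_{d\mid m}\mu(d)=\delta_{m,1}$, after pulling out the factor $(x_j/x_i)^a$ in the weighted case. There are two minor differences. You multiply in the order $UB$, whereas the paper computes $BB^{-1}$ and gets $\sum_{x_i|x_k|x_j}\mu(x_k/x_i)$. You also state explicitly that factor closedness is what lets the sum run over the full divisor interval, a step the paper uses without saying so.
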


\begin{proof}
Let $BB^{-1}=(c_{ij})$, where
$$
c_{ij}=\sum_{x_i|x_k|x_j}\mu(\frac{x_k}{x_i})=\begin{cases}
1, & \text{if } i=j,\\
0, & \text{otherwise}.
\end{cases}
$$

Let $[B_a][B_a]^{-1}=(c^a_{ij})$, where
$$c^a_{ij}=\sum_{x_i|x_k|x_j}\mu(\frac{x_k}{x_i})(\frac{x_k}{x_i})^a(\frac{x_j}{x_k})^a
=(\frac{x_j}{x_i})^a\sum_{d|\frac{x_j}{x_i}}\mu(d)$$
$$
=\begin{cases}
1, & \text{if } i=j,\\
0, & \text{otherwise}.
\end{cases}
$$

Thus, the proof of Lemma \ref{lem3.1} is complete.
\end{proof}

Now we can prove Theorem \ref{thm3.1}. Without loss of generality, assume that $a\ne b$.

\begin{proof}
By Corollary \ref{cor3.1} and Lemma \ref{lem3.1},
\begin{align*}
&((x_i,x_j)^a)^{-1}((x_i,x_j)^b) \\
&=((B^T)^{-1}diag(1,\prod_{p|x_2}(p^{a\nu_p(x_2)}-p^{a(\nu_p(x_2)-1)}),...,\prod_{p|x_n}(p^{a\nu_p(x_n)}-p^{a(\nu_p(x_n)-1)}))B^{-1})^{-1}\\
&(B^T)^{-1}diag(1,\prod_{p|x_2}(p^{b\nu_p(x_2)}-p^{b(\nu_p(x_2)-1)}),...,\prod_{p|x_n}(p^{b\nu_p(x_n)}-p^{b(\nu_p(x_n)-1)}))B^{-1}\\
&=Bdiag(1,\prod_{p|x_2}\frac{1}{p^{a\nu_p(x_2)}-p^{a(\nu_p(x_2)-1)}},...,\prod_{p|x_n}\frac{1}{p^{a\nu_p(x_n)}-p^{a(\nu_p(x_n)-1)}})\\
&diag(1,\prod_{p|x_2}(p^{b\nu_p(x_2)}-p^{b(\nu_p(x_2)-1)}),...,\prod_{p|x_n}(p^{b\nu_p(x_n)}-p^{b(\nu_p(x_n)-1)}))B^{-1}\\
&=Bdiag(1,\prod_{p|x_2}\frac{p^{b\nu_p(x_2)}-p^{b(\nu_p(x_2)-1)}}{p^{a\nu_p(x_2)}-p^{a(\nu_p(x_2)-1)}},...,\prod_{p|x_n}\frac{p^{b\nu_p(x_n)}-p^{b(\nu_p(x_n)-1)}}{p^{a\nu_p(x_n)}-p^{a(\nu_p(x_n)-1)}})B^{-1}.
\end{align*}

Since $B,B^{-1}\in M_n(\mathbb{Z})$, and because $a|b$, we have $\prod_{p|x_i}\frac{p^{b\nu_p(x_i)}-p^{b(\nu_p(x_i)-1)}}{p^{a\nu_p(x_i)}-p^{a(\nu_p(x_i)-1)}}\in\mathbb{Z}$. Therefore,
$diag(1,\prod_{p|x_2}\frac{p^{b\nu_p(x_2)}-p^{b(\nu_p(x_2)-1)}}{p^{a\nu_p(x_2)}-p^{a(\nu_p(x_2)-1)}},...,\prod_{p|x_n}\frac{p^{b\nu_p(x_n)}-p^{b(\nu_p(x_n)-1)}}{p^{a\nu_p(x_n)}-p^{a(\nu_p(x_n)-1)}})\in M_n(\mathbb{Z})$.

Hence, $((x_i,x_j)^a)^{-1}((x_i,x_j)^b)\in M_n(\mathbb{Z})$.

Next, we prove that $((x_i,x_j)^a)|([x_i,x_j]^a)$ and $([x_i,x_j]^a)|([x_i,x_j]^b)$.

By Theorem \ref{thm1.2}, Corollary \ref{cor3.1} and Lemma \ref{lem3.1},
\begin{align*}
&((x_i,x_j)^a)^{-1}([x_i,x_j]^a)=\\
&Bdiag(1,\prod_{p|x_2}\frac{1}{p^{a\nu_p(x_2)}-p^{a(\nu_p(x_2)-1)}},...,\prod_{p|x_n}\frac{1}{p^{a\nu_p(x_n)}-p^{a(\nu_p(x_n)-1)}})B^T([B_a]^T)^{-1}\\
&diag(1,\prod_{p|x_2}(p^{a\nu_p(x_2)}-p^{a(\nu_p(x_2)+1)}),...,\prod_{p|x_n}(p^{a\nu_p(x_n)}-p^{a(\nu_p(x_n)+1)}))[B_a]^{-1}.
\end{align*}

Since $B,[B_a]^{-1}\in M_n(\mathbb{Z})$, we only need to compute
\begin{align*}
H_a:=&diag(1,\prod_{p|x_2}\frac{1}{p^{a\nu_p(x_2)}-p^{a(\nu_p(x_2)-1)}},...,\prod_{p|x_n}\frac{1}{p^{a\nu_p(x_n)}-p^{a(\nu_p(x_n)-1)}})B^T\\
&([B_a]^T)^{-1}diag(1,\prod_{p|x_2}(p^{a\nu_p(x_2)}-p^{a(\nu_p(x_2)+1)}),...,\prod_{p|x_n}(p^{a\nu_p(x_n)}-p^{a(\nu_p(x_n)+1)})).
\end{align*}

By Lemma \ref{lem3.1} and Corollary \ref{cor2.1}, $B^T([B_a]^T)^{-1}=(g_{ij})$, where
$$g_{ij}=\sum_{x_j|x_k|x_i}\mu(\frac{x_i}{x_k})(\frac{x_k}{x_j})^a=\sum_{d|\frac{x_i}{x_j}}\mu(d)(\frac{x_i}{dx_j})^a
=\prod_{p|\frac{x_i}{x_j}}(p^{a\nu_p(\frac{x_i}{x_j})}-p^{a(\nu_p(\frac{x_i}{x_j})-1)}),$$
and $g_{ij}=0$ when $x_j\nmid x_i$.

Thus the $(i,j)$-entry of $H_a$ is
\begin{align*}
g'_{ij}:=&\left(\prod_{p|x_i}\frac{1}{p^{a\nu_p(x_i)}-p^{a(\nu_p(x_i)-1)}}\right)
g_{ij}\left(\prod_{p|x_j}(p^{a\nu_p(x_j)}-p^{a(\nu_p(x_j)+1)})\right)\\
=&\left(\prod_{p|x_i}\frac{1}{p^{a\nu_p(x_i)}-p^{a(\nu_p(x_i)-1)}}\right)\left(\prod_{p|\frac{x_i}{x_j}}(p^{a\nu_p(\frac{x_i}{x_j})}-p^{a(\nu_p(\frac{x_i}{x_j})-1)})\right)\\
&\left(\prod_{p|x_j}(p^{a\nu_p(x_j)}-p^{a(\nu_p(x_j)+1)})\right)\\
=&\frac{\prod_{p|x_j}p^a\prod_{p|\frac{x_i}{x_j}}(p^a-1)\prod_{p|x_j}(1-p^a)}{\prod_{p|x_i}(p^a-1)},
\end{align*}
where we use the fact that when $\nu_p(\frac{x_i}{x_j})>0$, $p^{a(\nu_p(x_i)-1)}=p^{a\nu_p(x_j)+a(\nu_p(\frac{x_i}{x_j})-1)}$. Moreover,
$$\prod_{p|\frac{x_i}{x_j}}(p^a-1)\prod_{p|x_j}(p^a-1)=\prod_{p|x_i}(p^a-1)\prod_{p|(x_j,\frac{x_i}{x_j})}(p^a-1),$$
so $g'_{ij}\in \mathbb{Z}$.

Therefore, $H_a\in M_n(\mathbb{Z})$, and hence $((x_i,x_j)^a)^{-1}([x_i,x_j]^a)\in M_n(\mathbb{Z})$.

Finally, by Theorem \ref{thm1.2} and Lemma \ref{lem3.1},
\begin{align*}
&([x_i,x_j]^a)^{-1}([x_i,x_j]^b)=\\
&[B^a]^Tdiag\{1,\frac{1}{\prod_{p|x_2}(p^{a\nu_p(x_2)}-p^{a(\nu_p(x_2)+1)})},...,\frac{1}{\prod_{p|x_n}(p^{a\nu_p(x_n)}-p^{a(\nu_p(x_n)+1)})}\}[B^a]\\
&([B^b]^T)^{-1}diag\{1,\prod_{p|x_2}(p^{b\nu_p(x_2)}-p^{b(\nu_p(x_2)+1)}),...,\prod_{p|x_n}(p^{b\nu_p(x_n)}-p^{b(\nu_p(x_n)+1)})\}[B^b]^{-1}.
\end{align*}

Since $[B_a]^T,[B^b]^{-1}\in M_n(\mathbb{Z})$, we only need to compute
\begin{align*}
H'_a:=&diag\{1,\frac{1}{\prod_{p|x_2}(p^{a\nu_p(x_2)}-p^{a(\nu_p(x_2)+1)})},...,\frac{1}{\prod_{p|x_n}(p^{a\nu_p(x_n)}-p^{a(\nu_p(x_n)+1)})}\}[B^a]\\
&([B^b]^T)^{-1}diag\{1,\prod_{p|x_2}(p^{b\nu_p(x_2)}-p^{b(\nu_p(x_2)+1)}),...,\prod_{p|x_n}(p^{b\nu_p(x_n)}-p^{b(\nu_p(x_n)+1)})\}.
\end{align*}

By Lemma \ref{lem3.1} and Corollary \ref{cor2.1}, $[B_a]([B_b]^T)^{-1}=(h_{ij})$, where
\begin{align*}
h_{ij}&=\sum_{x_j|x_k|x_i}\mu(\frac{x_i}{x_k})(\frac{x_i}{x_k})^a(\frac{x_k}{x_j})^b\\
&=(\frac{x_i}{x_j})^a\sum_{d|\frac{x_i}{x_j}}\mu(\frac{x_i}{dx_j})d^{b-a}\\
&=(\frac{x_i}{x_j})^a\prod_{p|\frac{x_i}{x_j}}(p^{(b-a)\nu_p(\frac{x_i}{x_j})}-p^{(b-a)(\nu_p(\frac{x_i}{x_j})-1)}).
\end{align*}
Thus the $(i,j)$-entry of $H'_a$ is
\begin{align*}
h'_{ij}:=&\frac{1}{\prod_{p|x_i}(p^{a\nu_p(x_i)}-p^{a(\nu_p(x_i)+1)})}h_{ij}\prod_{p|x_j}(p^{b\nu_p(x_j)}-p^{b(\nu_p(x_j)+1)})\\
=&\frac{\prod_{p|x_j}(p^{b\nu_p(x_j)}-p^{b(\nu_p(x_j)+1)})}{\prod_{p|x_i}(p^{a\nu_p(x_i)}-p^{a(\nu_p(x_i)+1)})}(\frac{x_i}{x_j})^a\prod_{p|\frac{x_i}{x_j}}(p^{(b-a)\nu_p(\frac{x_i}{x_j})}-p^{(b-a)(\nu_p(\frac{x_i}{x_j})-1)})\\
=&\frac{[(\frac{x_i}{x_j})^b\prod_{p|\frac{x_i}{x_j}}(1-\frac{1}{p^{b-a}})][x_j^b\prod_{p|x_j}(1-p^b)]}{x_i^a\prod_{p|x_i}(1-p^a)}\\
=&\frac{(x_i)^{b-a}\prod_{p|\frac{x_i}{x_j}}({p^{b-a}-1})\prod_{p|x_j}(1-p^b)}{\prod_{p|\frac{x_i}{x_j}}p^{b-a}\prod_{p|x_i}(1-p^a)}.
\end{align*}

Note that
$$\prod_{p|\frac{x_i}{x_j}}p^{b-a}\mid (x_i)^{b-a},\quad \prod_{p|x_i}(1-p^a)\mid \prod_{p|\frac{x_i}{x_j}}({p^{b-a}-1})\prod_{p|x_j}(1-p^b),$$
and hence $H'_a\in M_n(\mathbb{Z})$, which implies $([x_i,x_j]^a)^{-1}([x_i,x_j]^b)\in M_n(\mathbb{Z})$.

This completes the proof of Theorem \ref{thm3.1}.
\end{proof}

Inspired by Theorem \ref{thm3.1}, we may ask whether the matrix $(f^a((x_i,x_j)))$ with
 (completely) multiplicative function $f$ has similar divisibility properties. In fact, we have

\begin{thm}\label{thm3.2}
Let $S$, $a,b$ be as in Theorem \ref{thm3.1}.

If $f$ is multiplicative, then $(f^a((x_i,x_j)))|(f^b((x_i,x_j)))$.

If $f$ is completely multiplicative, then
$$(f^a((x_i,x_j)))|(f^b((x_i,x_j))),\quad (f^a((x_i,x_j)))|(f^b([x_i,x_j])) \text{ and } (f^a([x_i,x_j]))|(f^b([x_i,x_j])).$$
\end{thm}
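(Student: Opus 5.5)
I will mimic the proof of Theorem \ref{thm3.1}, using Theorem \ref{thm1.1} with the arithmetic function $f^a$ in place of $f$. Since $S$ is factor closed, Theorem \ref{thm1.1} gives
$$B^T(f^a((x_i,x_j)))B=diag(f^a*\mu(x_1),\dots,f^a*\mu(x_n)).$$
Similarly, $B^T(f^b((x_i,x_j)))B$ is diagonal with entries $f^b*\mu(x_k)$. Here $B$ is the same unimodular matrix in both, and its inverse is the integral matrix of Lemma \ref{lem3.1}. Hence
$$(f^a((x_i,x_j)))^{-1}(f^b((x_i,x_j)))=B\,diag\Big(\tfrac{f^b*\mu(x_k)}{f^a*\mu(x_k)}\Big)B^{-1}.$$
It therefore suffices to show that $f^a*\mu(x)\mid f^b*\mu(x)$ for every $x\in S$. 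Throughout I assume that $f$ is integer valued and that $f^a*\mu(x)\neq0$, so that the inverse exists; these assumptions are implicit in the statement.

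For multiplicative $f$, both $f^a*\mu$ and $f^b*\mu$ are multiplicative. Evaluating at $p^e$ gives $f(p^e)^a-f(p^{e-1})^a$ and $f(p^e)^b-f(p^{e-1})^b$, exactly as in Corollary \ref{cor2.1}. Since $a\mid b$, we have $u^a-v^a\mid u^b-v^b$ in $\mathbb{Z}$ for integers $u,v$. Taking the product over $p^e\,\|\,x$ gives the first divisibility.

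For completely multiplicative $f$ the first claim is a special case. For the LCM statements I need an analogue of Theorem \ref{thm1.2}. Write $g=f^a$, which is completely multiplicative. Then $g([x,y])=g(x)g(y)/g((x,y))$, at least when the values are nonzero. I will take $[B^a]_f$ with entries $\mu(x_j/x_i)g(x_j/x_i)$ when $x_i\mid x_j$, and repeat the computation in the proof of Theorem \ref{thm1.2}. The $(i,j)$ entry of $(g([x_i,x_j]))[B^a]_f$ becomes
$$g(x_i)g(x_j)\sum_{x_k\mid x_j}\mu(x_j/x_k)\,g((x_i,x_k))^{-1}.$$
Lemma \ref{lem2.1}, applied to the function $1/g$, makes this vanish unless $x_j\mid x_i$. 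The second conjugation then gives the diagonal entries $g(x_j)\prod_{p\mid x_j}(1-g(p))$. The inverse of $[B^a]_f$ has entries $g(x_j/x_i)$, proved as in Lemma \ref{lem3.1} using complete multiplicativity. The two remaining divisibilities then reduce to integrality of matrices $H$ and $H'$ as in Theorem \ref{thm3.1}. Their $(i,j)$ entries are the same expressions with $p^a$ replaced by $f(p)^a$ and $p^b$ by $f(p)^b$. For example, $g'_{ij}$ becomes
$$\frac{\prod_{p\mid x_j}f(p)^a\prod_{p\mid x_i/x_j}(f(p)^a-1)\prod_{p\mid x_j}(1-f(p)^a)}{\prod_{p\mid x_i}(f(p)^a-1)}.$$

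The main obstacle is this last integrality check. It uses that $f(p)^a-1$ divides $f(p)^b-1$, and that $\prod_{p\mid x_i/x_j}f(p)^{b-a}$ divides $f(x_i)^{b-a}$. Both follow from $a\mid b$ and complete multiplicativity, mirroring the divisibility notes at the end of the proof of Theorem \ref{thm3.1}. Care is needed when some $f(p)=0$ or $f(p)=1$, which makes the diagonal matrix singular. I would exclude these cases by the nonsingularity assumption.
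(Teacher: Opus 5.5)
Your proposal is correct and is essentially the paper's own proof: you diagonalize $(f^a((x_i,x_j)))$ with $B$ (Theorem~\ref{thm1.1}, Corollary~\ref{cor3.3}) and $(f^a([x_i,x_j]))$ with the twisted matrix $[B_{f^a}]$, then reduce each divisibility to integrality of a middle matrix via $u^a-v^a\mid u^b-v^b$, under the same implicit assumptions (integer values, invertibility) that the paper also makes. Two small points, neither of which affects the integrality conclusion: the power factor in your $g'_{ij}$, which the paper also has, should be $\prod_{p\mid x_j,\ p\nmid x_i/x_j}f(p)^a$ rather than $\prod_{p\mid x_j}f(p)^a$ (for $S=\{1,p,p^2\}$, $f(n)=n$, $a=1$ the $(p^2,p)$ entry is $-(p-1)$, not $-p(p-1)$), and for $H'$ you also need $f(p)^a-1\mid f(p)^{b-a}-1$ for primes dividing $x_i/x_j$ but not $x_j$.
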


Similar to Corollary \ref{cor2.1}, by Lemma \ref{lem2.1}, we have

\begin{cor}\label{cor3.2}
If $f$ is multiplicative, then
$$
\sum_{d|n}\mu(\frac{n}{d})f^k((d,m))=
\begin{cases}
\prod_{p|n}(f^k(p^{\nu_p(n)})-f^k(p^{\nu_p(n)-1})), & \text{if } n|m,\\
0, & \text{if } n\nmid m,
\end{cases}
$$
and if $f$ is completely multiplicative, then
$$
\sum_{d|n}\mu(\frac{n}{d})f^k((d,m))=
\begin{cases}
\prod_{p|n}(f(p)^{k\nu_p(n)}-f(p)^{k(\nu_p(n)-1)}), & \text{if } n|m,\\
0, & \text{if } n\nmid m,
\end{cases}
$$
where $p$ is a prime.
\end{cor}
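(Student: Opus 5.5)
The plan is to apply Lemma \ref{lem2.1} directly with the arithmetic function $g=f^k$, where $f^k$ denotes the pointwise $k$-th power, so $g(x)=f(x)^k$. Lemma \ref{lem2.1} holds for an arbitrary arithmetic function. It therefore gives at once that the sum vanishes when $n\nmid m$, and that it equals $(g*\mu)(n)=\sum_{d|n}\mu(\frac{n}{d})f(d)^k$ when $n|m$. All that remains is to evaluate $g*\mu$ at $n$ in closed form, which is the same computation as in Corollary \ref{cor2.1} with $d^k$ replaced by $f(d)^k$.

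First I note that if $f$ is multiplicative, then $g=f^k$ is multiplicative as well, since $f(xy)^k=f(x)^kf(y)^k$ whenever $(x,y)=1$. Because $\mu$ is multiplicative, the Dirichlet convolution $g*\mu$ is multiplicative. I then write $n=p_1^{\nu_{p_1}(n)}\cdots p_e^{\nu_{p_e}(n)}$, which gives $(g*\mu)(n)=\prod_{i=1}^e (g*\mu)(p_i^{\nu_{p_i}(n)})$. On a prime power $p^r$ with $r\ge1$, only the divisors $p^r$ and $p^{r-1}$ contribute, because $\mu(p^s)=0$ for $s\ge2$. Hence $(g*\mu)(p^r)=f(p^r)^k-f(p^{r-1})^k$, which is the first formula. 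Alternatively, I can expand the sum over squarefree $\frac{n}{d}$ exactly as in the proof of Corollary \ref{cor2.1} and factor the result as a product over primes, using multiplicativity of $f^k$ to split $f(d)^k$ across the prime powers.

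For completely multiplicative $f$, I have $f(p^r)=f(p)^r$. Substituting this into the previous formula gives $f(p)^{k\nu_p(n)}-f(p)^{k(\nu_p(n)-1)}$. This covers the case $\nu_p(n)-1=0$ too, since $f(1)=1$ for any multiplicative $f$. The only point needing care is that we take $f(1)=1$ (that is, $f$ not identically zero) so that the factorization over primes is valid, and that the empty product for $n=1$ gives $f(1)^k=1$, which is consistent. I do not expect a real obstacle. The main step is recognizing that multiplicativity of $f^k$ makes $f^k*\mu$ multiplicative, which reduces everything to prime powers.
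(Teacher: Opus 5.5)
Your proposal is correct and follows essentially the same route as the paper: apply Lemma \ref{lem2.1} to the function $f^k$, then evaluate $(f^k*\mu)(n)$ as a product over the prime powers exactly dividing $n$. The paper does the second step by expanding the sum over squarefree $n/d$ as in Corollary \ref{cor2.1}, which is the explicit form of your multiplicativity argument; your remark that $f(1)=1$ is needed for the case $n=1$ is a point the paper leaves implicit.
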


\begin{proof}
Similar to the proof of Corollary \ref{cor2.1}, we only need to compute
$$\sum_{d|n}\mu(\frac{n}{d})f^k(d)=f^k(n)+\sum_{i=1}^e\mu(p_i)f^k(\frac{n}{p_i})+...+\mu(\prod_{i=1}^ep_i)f^k(\frac{n}{\prod_{i=1}^ep_i})$$
$$=\prod_{i=1}^e(f^k(p_i^{\nu_{p_i}(n)})+\mu(p_i)f^k(p_i^{\nu_{p_i}(n)-1}))=\prod_{i=1}^e(f^k(p_i^{\nu_{p_i}(n)})-f^k(p_i^{\nu_{p_i}(n)-1})).$$
When $f$ is completely multiplicative, $f(p^\nu)=f^\nu(p)$, so
$$\prod_{i=1}^e(f^k(p_i^{\nu_{p_i}(n)})-f^k(p_i^{\nu_{p_i}(n)-1}))=\prod_{i=1}^e(f^{k\nu_{p_i}(n)}(p_i)-f^{k(\nu_{p_i}(n)-1)}(p_i)).$$

This completes the proof.
\end{proof}

By Corollary \ref{cor3.2}, similar to Theorem \ref{thm1.2} and Corollary \ref{cor3.1}, we have

\begin{cor}\label{cor3.3}
Let $S$ be as in Theorem \ref{thm3.1}. If $f$ is multiplicative, then
\begin{align*}
&B^T(f^a(x_i,x_j))B=\\
&diag(1,\prod_{p|x_2}(f^a(p^{\nu_p(x_2)})-f^a(p^{\nu_p(x_2)-1})),...,\prod_{p|x_n}(f^a(p^{\nu_p(x_n)})-f^a(p^{\nu_p(x_n)-1}))).\tag{3.1}
\end{align*}
If $f$ is completely multiplicative, then
\begin{align*}
&B^T(f^a(x_i,x_j))B=\\
&diag(1,\prod_{p|x_2}(f^{a\nu_p(x_2)}(p)-f^{a(\nu_p(x_2)-1)}(p)),...,\prod_{p|x_n}(f^{a\nu_p(x_n)}(p)-f^{a(\nu_p(x_n)-1)}(p))).\tag{3.2}
\end{align*}
Moreover,
\begin{align*}
&[B_{f^a}]^T(f^a[x_i,x_j])[B_{f^a}]=\\
&diag(1,\prod_{p|x_2}(f^{a\nu_p(x_2)}(p)-f^{a(\nu_p(x_2)+1)}(p)),...,\prod_{p|x_n}(f^{a\nu_p(x_n)}(p)-f^{a(\nu_p(x_n)+1)}(p))),\tag{3.3}
\end{align*}
where $[B_{f^a}]=(b^{f^a}_{ij})$ with
$$
b^{f^a}_{ij}=\begin{cases}
\mu(\frac{x_j}{x_i})f((\frac{x_j}{x_i}))^a, & \text{if } x_i|x_j,\\
0, & \text{otherwise}.
\end{cases}
$$
\end{cor}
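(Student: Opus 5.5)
We prove the three identities in the same way as Theorem \ref{thm1.1} and Theorem \ref{thm1.2}: multiply out $B^T(\cdot)B$ (respectively $[B_{f^a}]^T(\cdot)[B_{f^a}]$) one factor at a time, and evaluate the inner sums with Lemma \ref{lem2.1} and Corollary \ref{cor3.2}.

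For (3.1) and (3.2), Theorem \ref{thm1.1} applies directly with the arithmetic function $g=f^a$. It gives $B^T(f^a((x_i,x_j)))B=\mathrm{diag}(g*\mu(x_1),\dots,g*\mu(x_n))$, since factor closedness guarantees that the sum over $x_k\mid x_j$ runs over all divisors. Corollary \ref{cor3.2} with $k=a$ and $n=m=x_j$ identifies
$$g*\mu(x_j)=\sum_{d\mid x_j}\mu(x_j/d)f^a(d)$$
with the stated product over $p\mid x_j$. In the completely multiplicative case we use $f(p^\nu)=f(p)^\nu$. For the first entry we take $x_1=1$, so the entry is $g*\mu(1)=f(1)^a=1$, assuming as usual that $f$ is normalized with $f(1)=1$.

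For (3.3) we follow the second half of the proof of Theorem \ref{thm1.2}. Complete multiplicativity gives $f([x_i,x_k])=f(x_i)f(x_k)/f((x_i,x_k))$, because $[x,y](x,y)=xy$; this requires $f$ to be nonzero on the relevant gcds, which we assume, as the paper implicitly does. Hence the $(i,j)$ entry of $(f^a([x_i,x_j]))[B_{f^a}]$ is
$$\sum_{x_k\mid x_j}\frac{f^a(x_i)f^a(x_k)}{f^a((x_i,x_k))}\,\mu\!\left(\frac{x_j}{x_k}\right)f^a\!\left(\frac{x_j}{x_k}\right)=f^a(x_i)f^a(x_j)\sum_{x_k\mid x_j}\mu\!\left(\frac{x_j}{x_k}\right)h((x_i,x_k)),$$
where $h=f^{-a}$ is again completely multiplicative. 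Lemma \ref{lem2.1} and Corollary \ref{cor3.2}, applied with $f^{-1}$ in place of $f$, show that this sum vanishes unless $x_j\mid x_i$. When $x_j\mid x_i$ it equals $\prod_{p\mid x_j}\bigl(f(p)^{-a\nu}-f(p)^{-a(\nu-1)}\bigr)$ with $\nu=\nu_p(x_j)$, and multiplying by $f^a(x_j)$ turns this into $\prod_{p\mid x_j}(1-f(p)^a)$. So the entry is $f^a(x_i)\prod_{p\mid x_j}(1-f(p)^a)$ if $x_j\mid x_i$, and $0$ otherwise.

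Next we multiply on the left by $[B_{f^a}]^T$. The $(i,j)$ entry becomes
$$\sum_{x_j\mid x_k\mid x_i}\mu\!\left(\frac{x_i}{x_k}\right)f^a\!\left(\frac{x_i}{x_k}\right)f^a(x_k)\prod_{p\mid x_j}(1-f(p)^a)=f^a(x_i)\prod_{p\mid x_j}(1-f(p)^a)\sum_{x_j\mid x_k\mid x_i}\mu\!\left(\frac{x_i}{x_k}\right),$$
using complete multiplicativity once more. Factor closedness makes the inner sum $\sum_{d\mid x_i/x_j}\mu(d)=\delta_{ij}$. The diagonal entry $f^a(x_j)\prod_{p\mid x_j}(1-f(p)^a)$ equals $\prod_{p\mid x_j}\bigl(f(p)^{a\nu_p(x_j)}-f(p)^{a(\nu_p(x_j)+1)}\bigr)$, which is (3.3).

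The main obstacle is (3.3). There complete multiplicativity is used twice, once to split $f([x_i,x_k])$ and once to merge $f^a(x_i/x_k)f^a(x_k)$, and the step through $f^{-a}$ needs $f(p)\neq 0$. If some $f(p)=0$, we would instead verify the identity directly, as a polynomial identity in the values $f(p)$: both sides are polynomials in these values, so the identity extends by continuity.
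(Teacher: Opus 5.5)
Your proposal is correct and follows the paper's proof: (3.1) and (3.2) come from Theorem \ref{thm1.1} applied to $f^a$ together with Corollary \ref{cor3.2}, and (3.3) is the proof of Theorem \ref{thm1.2} rerun with $f(x_i),f(d),f(p)$ in place of $x_i,d,p$. You also make explicit two assumptions the paper leaves unstated. That rerun divides by $f((x_i,x_k))^a$, so it needs $f(p)\neq 0$, and you remove this by your polynomial-identity/density argument. The leading diagonal entry $1$ needs $x_1=1$ and $f(1)=1$.
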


\begin{proof}
If $f$ is (completely) multiplicative, we use Theorem \ref{thm1.1} and Corollary \ref{cor3.2} to obtain (3.1) and (3.2).

If $f$ is completely multiplicative, we modify the proof of Theorem \ref{thm1.2} by replacing all integers $x_i,d,p$
with $f(x_i),f(d),f(p)$ to prove (3.3).
\end{proof}

Similarly, as in Lemma \ref{lem3.1}, we find that $[B_{f^a}]^{-1}=(b'^{f^a}_{ij})_{1\le i,j \le n}$, where
$$
b'^{f^a}_{ij}=
\begin{cases}
f(\frac{x_i}{x_j})^a, & \text{if } x_j|x_i,\\
0, & \text{otherwise}.
\end{cases}
$$

Now we can complete the proof of Theorem \ref{thm3.2}.

\begin{proof}
Similar to Theorem \ref{thm3.1}, if $f$ is (completely) multiplicative, then
\begin{align*}
&(f^a(x_i,x_j))^{-1}(f^b(x_i,x_j))=\\
&Bdiag(1,\prod_{p|x_2}\frac{f^b(p^{\nu_p(x_2)})-f^b(p^{\nu_p(x_2)-1})}{f^a(p^{\nu_p(x_2)})-f^a(p^{\nu_p(x_2)-1})},...,\prod_{p|x_n}\frac{f^b(p^{\nu_p(x_n)})-f^b(p^{\nu_p(x_n)-1})}{f^a(p^{\nu_p(x_n)})-f^a(p^{\nu_p(x_n)-1})})B^{-1}.
\end{align*}

If $f$ is completely multiplicative, then
$$(f^a(x_i,x_j))^{-1}(f^a[x_i,x_j])=BH_{f^a}[B_{f^a}]^{-1},$$
where the $(i,j)$-entry of $H_{f^a}$ is
$$\frac{\prod_{p|x_j}f^a(p)\prod_{p|\frac{x_i}{x_j}}(f^a(p)-1)\prod_{p|x_j}(1-f^a(p))}{\prod_{p|x_i}(f^a(p)-1)}\in\mathbb{Z},$$
and
$$(f^a[x_i,x_j])^{-1}(f^b[x_i,x_j])=[B_{f^a}]H_{f^a,f^b}[B_{f^b}]^{-1},$$
where the $(i,j)$-entry of $H_{f^a,f^b}$ is
$$\frac{f^{b-a}(x_i)\prod_{p|\frac{x_i}{x_j}}(f^{b-a}(p)-1)\prod_{p|x_j}(1-f^b(p))}
{\prod_{p|\frac{x_i}{x_j}}f^{b-a}(p)\prod_{p|x_i}(1-f^a(p))}.$$

By a similar argument as in the proof that $([x_i,x_j]^a)^{-1}([x_i,x_j]^b)\in M_n(\mathbb{Z})$, we have $H_{f^a,f^b}\in M_n(\mathbb{Z})$.

This completes the proof of Theorem \ref{thm3.2}.
\end{proof}

\section{A Generalization of the FC Set}
If $S$ is gcd closed, the problem may be much more complicated. However, we can add some conditions to obtain a set similar to an FC set.

We denote by $G_S(x)$ the set of all the greatest-type divisors of $x$ in $S$ (see \cite{[H2]}). For any set $R$ of positive integers
and for any $x\in R$, we define $g_R(x)$ to be the number of greatest-type divisors of $x$ in $R$, i.e., $g_R(x):=|G_R(x)|$.
For brevity, we write $g(x)$ for $g_R(x)$.

\begin{defn}\cite{[H3]}
Let $S$ be a set of positive integers, and let $x\in S$ with $g(x)\ge2$.
(i) We say that two distinct greatest-type divisors $y_1$ and $y_2$ of $x$ in $S$ satisfy condition $\mathcal{G}$ if $[y_1,y_2]=x$ and $(y_1,y_2)\in G_S(y_1)\cap G_S(y_2)$.
(ii) We say that $x$ satisfies condition $\mathcal{G}$ if any two distinct greatest-type divisors of $x$ in $S$ satisfy condition $\mathcal{G}$.
\end{defn}

It should be pointed out that if $S$ is gcd closed, then the definition of condition $\mathcal{G}$ given above coincides with that of
 condition $\mathcal{C}$ given in \cite{[FHZ]}.

\begin{defn}\cite{[H3]}
Let $S$ be a set of positive integers. We say that the set $S$ satisfies condition $\mathcal{G}$ if every element $x\in S$ satisfies
 either $g(x)\le1$, or $g(x)\ge2$ and $x$ satisfies condition $\mathcal{G}$.
\end{defn}

Now we give an important property of a gcd closed set $S$ satisfying condition $\mathcal{G}$.

First, by the definition of gcd closed, we find that for $S=\{x_1,x_2,...,x_n\}$, we have $(x_1,x_2,...,x_n) \in S$, and $\{\frac{x_1}{(x_1,x_2,...,x_n)},\frac{x_2}{(x_1,x_2,...,x_n)},...,\frac{x_n}{(x_1,x_2,...,x_n)}\}$ is also gcd closed.
 Therefore, without loss of generality, we always assume that $(x_1,x_2,...,x_n)=1$, and the gcd closed set $S$ satisfies condition $\mathcal{G}$.

\begin{pro}\label{pro4.1}
For every $x\in S$ with $g(x)=k\ge 2$, let $G_S(x)=\{y_1,...,y_k\}$. Then we have $x=(y_1,...,y_k)\prod_{i=1}^k\frac{x}{y_i}$.
\end{pro}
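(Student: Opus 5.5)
The identity follows from the lcm part of condition $\mathcal{G}$ alone, by comparing $p$-adic valuations of both sides one prime at a time. Fix $x\in S$ with $g(x)=k\ge 2$ and $G_S(x)=\{y_1,\dots,y_k\}$. Since $x$ satisfies condition $\mathcal{G}$, every pair of distinct indices $i\ne j$ has $[y_i,y_j]=x$. The other half of condition $\mathcal{G}$, that $(y_i,y_j)\in G_S(y_i)\cap G_S(y_j)$, should not be needed here.

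Each $y_i$ divides $x$, so all the quotients $x/y_i$ are positive integers. It therefore suffices to show, for every prime $p$,
$$\nu_p(x)=\min_{1\le i\le k}\nu_p(y_i)+\sum_{i=1}^k\bigl(\nu_p(x)-\nu_p(y_i)\bigr).$$

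The key step is the following observation. For a fixed prime $p$, at most one index $i$ has $\nu_p(y_i)<\nu_p(x)$. Indeed, suppose two distinct indices $i\ne j$ both had $\nu_p(y_i)<\nu_p(x)$ and $\nu_p(y_j)<\nu_p(x)$. Then
$$\nu_p([y_i,y_j])=\max(\nu_p(y_i),\nu_p(y_j))<\nu_p(x),$$
which contradicts $[y_i,y_j]=x$. In particular the quotients $x/y_i$ are pairwise coprime.

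Now split into two cases for each prime $p$.

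\emph{Case 1: no index is deficient at $p$.} Every $\nu_p(y_i)=\nu_p(x)$. Then the minimum on the right equals $\nu_p(x)$ and every summand is $0$, so the identity holds.

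\emph{Case 2: exactly one index $i_0$ is deficient at $p$.} For $i\ne i_0$ we have $\nu_p(y_i)=\nu_p(x)$, so the minimum is $\nu_p(y_{i_0})$. The only nonzero summand is $\nu_p(x)-\nu_p(y_{i_0})$. Adding these gives $\nu_p(x)$, as required.

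Since the $p$-adic valuations agree for every prime $p$, we get $x=(y_1,\dots,y_k)\prod_{i=1}^k \frac{x}{y_i}$.

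The only place where care is needed is the pigeonhole observation that at most one $y_i$ is deficient at each prime. This uses $k\ge 2$ and the pairwise lcm condition for all pairs, not just adjacent ones. Once that is in place, the rest is bookkeeping.
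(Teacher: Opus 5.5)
Your argument is correct, and it takes a different route from the paper's.

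Both proofs start from the same consequence of $[y_i,y_j]=x$: the quotients $x/y_i$ are pairwise coprime. The paper gets this from $\frac{y_j}{(y_i,y_j)}=\frac{x}{y_i}$; you get it from your observation that at most one index is deficient at each prime.

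The paper then finishes with a telescoping product
\[
\prod_{i=1}^k\frac{x}{y_i}=\frac{x}{y_1}\cdot\frac{y_1}{(y_1,y_2)}\prod_{i=3}^k\frac{(y_1,\dots,y_{i-1})}{(y_1,\dots,y_i)}=\frac{x}{(y_1,\dots,y_k)}.
\]
Each rewriting step, such as $\frac{y_1}{(y_1,y_i)}=\frac{(y_1,y_2)}{(y_1,y_2,y_i)}$, is justified by applying condition $\mathcal{G}$ one level further down. For example, $(y_1,y_2)$ and $(y_1,y_i)$ are greatest-type divisors of $y_1$, so their lcm is $y_1$. The deeper steps rest on the inductively sketched claim that the iterated gcds $(y_{i_1},\dots,y_{i_j},y_l)$ are greatest-type divisors of $(y_{i_1},\dots,y_{i_j})$.

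Your valuation count shows none of this is needed. The identity follows from $y_i\mid x$ and the pairwise lcm condition at $x$ alone. It does not use gcd-closure or the greatest-type half of $\mathcal{G}$. So your proof is shorter and more general, and it is complete at exactly the point where the paper's ``$\cdots$'' step is only sketched.

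What the paper's route buys is structural. The chain of iterated gcds it builds is exactly the structure used in Corollary~\ref{cor4.1} and in defining $\mu_x$. That corollary genuinely needs the greatest-type half of $\mathcal{G}$, so the hypothesis you set aside is dispensable only for this proposition, not for the section.
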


\begin{proof}
In fact, for all $i\ne j$, we have $\frac{y_iy_j}{(y_i,y_j)}=x$, which implies $\frac{y_j}{(y_i,y_j)}=\frac{x}{y_i}$ and $\frac{y_i}{(y_i,y_j)}=\frac{x}{y_j}$. Therefore, $(\frac{x}{y_j},\frac{x}{y_i})=(\frac{y_i}{(y_i,y_j)},\frac{y_j}{(y_i,y_j)})=1$.

Moreover, $(y_i,y_j)\in G_S(y_i)\cap G_S(y_j)$. By induction,
$$G_S((y_{i_1},y_{i_2},...,y_{i_j}))\supseteq \{(y_{i_1},y_{i_2},...,y_{i_j},y_l)\mid l\not=i_1,...,i_j\}.$$

Now we compute
$$\prod_{i=1}^k\frac{x}{y_i}=\frac{x}{y_1}\prod_{i=2}^k\frac{y_1}{(y_1,y_i)}=\frac{x}{y_1}\frac{y_1}{(y_1,y_2)}\prod_{i=3}^k\frac{(y_1,y_2)}{(y_1,y_2,y_i)}$$
$$\cdots =\frac{x}{y_1}\frac{y_1}{(y_1,y_2)}\prod_{i=3}^k\frac{(y_1,y_2,...,y_{i-1})}{(y_1,y_2,...,y_i)}=\frac{x}{(y_1,...,y_k)},$$
which completes the proof.
\end{proof}

\begin{cor}\label{cor4.1}
Let $x$ and $G_S(x)$ be as above. Then for $j<k$,
$$G_S((y_{i_1},y_{i_2},...,y_{i_j}))=\{(y_{i_1},y_{i_2},...,y_{i_j},y_l)\mid l\not=i_1,...,i_j\}.$$
\end{cor}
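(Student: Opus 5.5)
The containment $\supseteq$ is already contained in the proof of Proposition \ref{pro4.1}, so the work is the reverse inclusion. Write $I=\{i_1,\dots,i_j\}$ and $z=(y_{i_1},\dots,y_{i_j})$. By gcd-closedness each $z_l:=(z,y_l)$ with $l\notin I$ lies in $S$, and the inductive step in that proof shows $z_l\in G_S(z)$. What remains is to show that every greatest-type divisor $w$ of $z$ in $S$ equals some $z_l$.

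\emph{Step 1: coprime cofactors.} I would first record, from the proof of Proposition \ref{pro4.1}, that the numbers $q_i:=x/y_i$ are pairwise coprime and $y_i=x/q_i$. Hence $z=x/\prod_{i\in I}q_i$ and $z_l=x/\bigl(q_l\prod_{i\in I}q_i\bigr)$, so $z/z_l=q_l$. Every $q_l>1$, so each $z_l$ is a proper divisor of $z$.

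\emph{Step 2: reduce to one $l$.} Let $w\in G_S(z)$. Then $w\mid z$, and $w\ne z$.
\begin{itemize}
\item Suppose $w\mid y_l$ for some $l\notin I$. Then $w\mid (z,y_l)=z_l$, and $z_l\mid z$ with $z_l\ne z$. Maximality of $w$ forces $w=z_l$, and we are done.
\item Otherwise $w\nmid y_l$ for every $l\notin I$. Here I would argue by maximality upward in the lattice. Since $w\in S$ is a proper divisor of $x$, it divides some greatest-type divisor of $x$ in $S$; that is, $w\mid y_m$ for some $m$.
\end{itemize}
For a proper divisor $u$ of $x$ in $S$, take any maximal element of $\{v\in S: u\mid v\mid x,\ v\ne x\}$. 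This is a greatest-type divisor of $x$, so such an $m$ always exists.

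\emph{Step 3: reach a contradiction in the remaining case.} In the second case we have $w\nmid y_l$ for all $l\notin I$, so $m\in I$. Since $w\mid z$, in fact $w\mid y_i$ for every $i\in I$.

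This is the main obstacle. The remaining case must be ruled out, and the divisibility data alone do not do it. For instance, $w=z/p$ with $p$ a prime dividing $z$ but coprime to all $q_l$ is not excluded by Steps 1--2. What excludes it is the structure of $S$: in such a case $w\in S$ is a divisor of $z$ that lies strictly between $z_l$ and $z$ in a way incompatible with condition $\mathcal{G}$ being imposed at $y_i$. My first attempt is induction on $j$, applying the hypothesis to the elements $y_i$ ($i\in I$) themselves, which lie in $S$ and satisfy condition $\mathcal{G}$.

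\emph{Base case $j=1$.} Here $z=y_i$, and we know $G_S(y_i)\supseteq\{(y_i,y_l)\}_{l\ne i}$. We must show there is no other greatest-type divisor of $y_i$. I would argue that any additional $w\in G_S(y_i)$, combined with some $(y_i,y_l)$, violates condition $\mathcal{G}$ at $y_i$. Indeed, condition $\mathcal{G}$ at $y_i$ would require $[w,(y_i,y_l)]=y_i$. Comparing the cofactors $y_i/w$ and $q_l$, this forces $y_i/w$ to be coprime to $q_l$. One then seeks a contradiction by exhibiting $(w,y_l)$ as a greatest-type divisor of $y_l$ not accounted for, and iterating down the finite set $S$.

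\emph{Fallback.} If this descent fails, the honest fallback is that the corollary needs the reverse inclusion essentially as a hypothesis: ``factor-closed-like'' behavior. In that case I would look for the argument in \cite{[H3]}.
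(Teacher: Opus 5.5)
Your Steps 1 and 2 are sound. The inclusion $\supseteq$ does follow by induction from condition $\mathcal{G}$. The cofactors $q_i=x/y_i$ are pairwise coprime, with $z=x/\prod_{i\in I}q_i$ and $z/z_l=q_l$. Any $w\in G_S(z)$ that divides some $y_l$ with $l\notin I$ must equal $z_l$.

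\textbf{The gap is Step 3, and it cannot be closed, because the reverse inclusion is false.} Take $S=\{1,2,3,4,6,12\}$, the divisors of $12$. It is factor closed (so gcd closed and containing $1$), and it satisfies condition $\mathcal{G}$. Indeed, in an FC set $G_S(x)=\{x/p : p \text{ prime},\ p\mid x\}$, and for distinct primes $p,q\mid x$ one has $[x/p,x/q]=x$ and $(x/p,x/q)=x/(pq)\in G_S(x/p)\cap G_S(x/q)$.

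For $x=12$ we get $G_S(12)=\{y_1,y_2\}=\{4,6\}$, so $k=2$, $q_1=3$, $q_2=2$. With $j=1$ and $y_{i_1}=6$, the corollary asserts $G_S(6)=\{(6,4)\}=\{2\}$. In fact $G_S(6)=\{2,3\}$. The extra element $w=3=6/2$ is exactly the configuration you flagged: $w=z/p$ with $p=2$ coprime to $q_1=3$.

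Your proposed base-case contradiction does not materialize either. Condition $\mathcal{G}$ at $6$ holds, since $[3,2]=6$ and $(3,2)=1\in G_S(2)\cap G_S(3)$. Also $(w,y_1)=(3,4)=1$ is not a greatest-type divisor of $4$ (as $G_S(4)=\{2\}$), so there is nothing to descend on.

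The paper's proof does not get around this. It only checks the identity $y_1=((y_1,y_2),\dots,(y_1,y_k))\prod_{i=2}^k\frac{y_1}{(y_1,y_i)}$, i.e., that the candidate set has the shape appearing in Proposition 4.1, and then declares the general case by induction. That identity (here $6=(6,4)\cdot 3$) is necessary, but it says nothing about whether $G_S(y_1)$ contains further elements. So the paper's argument has precisely the hole you isolated.

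Your fallback instinct is therefore correct, but sharpen it. Under the stated hypotheses only $\supseteq$ holds. Equality requires an additional assumption (for instance $g((y_{i_1},\dots,y_{i_j}))=k-j$), not a cleverer argument or a citation.
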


\begin{proof}
Since
$$y_1=(y_1,...,y_k)\prod_{i=2}^k\frac{x}{y_i}=(y_1,...,y_k)\prod_{i=2}^k\frac{y_1}{(y_1,y_i)}=((y_1,y_2),...,(y_1,y_k))\prod_{i=2}^k\frac{y_1}{(y_1,y_i)},$$
the corollary holds for $y_1$. Similarly, it holds for all $y_i$. The general case follows by induction on $j$ and $(y_{i_1},y_{i_2},...,y_{i_j})$.
\end{proof}

Now we define the imitative M\H{o}bius function of $x$ as follows:
$$
\mu_x(t)=
\begin{cases}
1, & t=1,\\
(-1)^j, & t=\prod_{k=1}^j\frac{x}{y_{i_k}},\\
0, & \text{otherwise}.
\end{cases}
$$

By the definition, if $y\in G_S(x)$, then
$$\mu_y(t)=\mu_x(t),\quad \text{for }t\mid\prod_{y'\in G_S(x),y'\not=y}\frac{x}{y'}.$$

\begin{lem}\label{lem4.1}
For $h\in S$, we have
$$
\sum_{d|x,d\in S}\mu_x(\frac{x}{d})(d,h)^a=
\begin{cases}
(y_1,...,y_k)^a\prod_{i=1}^k((\frac{x}{y_i})^a-1), & \text{if } x|h,\\
0, & \text{otherwise}.
\end{cases}
$$
\end{lem}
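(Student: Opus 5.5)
The plan is to rewrite the sum explicitly as a signed sum over subsets of $G_S(x)=\{y_1,\dots,y_k\}$, and then handle the two cases in the style of Lemma \ref{lem2.1}. Write $m_i=\frac{x}{y_i}$ and $z=(y_1,\dots,y_k)$. For $J\subseteq\{1,\dots,k\}$ put $y_J=(y_i:i\in J)$, with $y_\emptyset=x$.

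\emph{Reindexing the sum.} By the proof of Proposition \ref{pro4.1}, the $m_i$ are pairwise coprime and $x=z\prod_{i=1}^k m_i$. The same telescoping computation, restricted to an index set $J$, gives
$$y_J=\frac{x}{\prod_{i\in J}m_i}=z\prod_{i\notin J}m_i .$$
By definition, $\mu_x(\frac{x}{d})\neq 0$ exactly when $\frac{x}{d}=\prod_{i\in J}m_i$ for some $J$, that is, when $d=y_J$. Each such $d$ lies in $S$ because $S$ is gcd closed. Distinct $J$ give distinct $d$, since the $m_i>1$ are pairwise coprime, so the products $\prod_{i\in J}m_i$ are distinct. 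In that case $\mu_x(\frac{x}{d})=(-1)^{|J|}$. Hence
$$\sum_{d|x,d\in S}\mu_x(\tfrac{x}{d})(d,h)^a=\sum_{J\subseteq\{1,\dots,k\}}(-1)^{|J|}(y_J,h)^a .$$

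\emph{Case $x|h$.} Here $(y_J,h)=y_J=z\prod_{i\notin J}m_i$. The sum factors as
$$z^a\sum_{J}(-1)^{|J|}\prod_{i\notin J}m_i^a=z^a\prod_{i=1}^k(m_i^a-1),$$
which is the claimed value.

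\emph{Case $x\nmid h$.} This is the main step; I use a pairing involution instead of a prime $p$. Since $S$ is gcd closed, $(x,h)\in S$, and it is a proper divisor of $x$. Every proper divisor of $x$ lying in $S$ divides some greatest-type divisor of $x$, so $(x,h)\,|\,y_{i_0}$ for some $i_0$. For any $J$ we have $y_J\,|\,x$, so
$$(y_J,h)=(y_J,(x,h))=((y_J,y_{i_0}),(x,h))=(y_{J\cup\{i_0\}},h),$$
where the middle equality uses $(x,h)\,|\,y_{i_0}$. Therefore the terms for $J$ and $J\cup\{i_0\}$, with $i_0\notin J$, have equal values $(y_J,h)^a$ and opposite signs, and the whole sum vanishes.

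The only delicate points are the identification of the support of $\mu_x(\frac{x}{\cdot})$ with $\{y_J\}$, which rests on Proposition \ref{pro4.1} and Corollary \ref{cor4.1}, and the use of greatest-type divisors to find $i_0$. The degenerate cases $g(x)\le 1$ fit the same computation, with $k=1$ or an empty product.
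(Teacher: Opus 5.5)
Your proof is correct and follows essentially the same route as the paper: the $x\mid h$ case is the same expansion into $z^a\prod_{i}(m_i^a-1)$, and your involution $J\leftrightarrow J\cup\{i_0\}$ is exactly the paper's pairing of a divisor $d\mid y_{i_0}$ with $\frac{x}{y_{i_0}}d$, since $y_J=\frac{x}{y_{i_0}}\,y_{J\cup\{i_0\}}$ for $i_0\notin J$. Your explicit reindexing of the nonzero terms as $\{y_J\}$, with distinctness coming from the pairwise coprimality of the $m_i$, makes precise what the paper leaves implicit.
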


\begin{proof}
If $x\nmid h$, then there exists $y_i$ such that $(x,h)|y_i$, because $(x,h)|x$ but $(x,h)\ne x$.

Assume that $(x,h)|y_1$. Then for every $d=(y_1,...,y_k)\prod_{j=1}^l\frac{x}{y_{i_j}}$ with $i_1,...,i_l\ne 1$, we have $(d,h)=(\frac{x}{y_1}d,h)$, since
$(\frac{x}{y_1}d,h)=(y_1,\frac{x}{y_1}d,h)=((y_1,\frac{x}{y_1}d),h)=(d,h)$.

Therefore, in the sum $\sum_{d|x,d\in S}\mu_x(\frac{x}{d})(d,h)^a$, we can pair $d$ with $\frac{x}{y_1}d$, and we obtain
$$\mu_x(\frac{x}{d})(d,h)^a+\mu_x(\frac{y_1}{d})(\frac{x}{y_1}d,h)^a=\mu_x(\frac{y_1}{d})\left(\mu_x(\frac{x}{y_1})(d,h)^a+(\frac{x}{y_1}d,h)^a\right)=0.$$

Thus,
$$\sum_{d|x,d\in S}\mu_x(\frac{x}{d})(d,h)^a=\sum_{d|y_1,d\in S}\left(\mu_x(\frac{x}{d})(d,h)^a+\mu_x(\frac{y_1}{d})(\frac{x}{y_1}d,h)^a\right)=0.$$

If $x|h$, then $(d,h)=d$, and hence
\begin{align*}
\sum_{d|x,d\in S}\mu_x(\frac{x}{d})(d,h)^a&=\sum_{d|x,d\in S}\mu_x(\frac{x}{d})d^a\\
&=x^a+\sum_{i=1}^k\mu_x(\frac{x}{y_i})y_i^a+...+\mu_x(\prod_{i=1}^k\frac{x}{y_i})(y_1,...,y_k)^a\\
&=(y_1,...,y_k)^a\prod_{i=1}^k\left((\frac{x}{y_i})^a+\mu_x(\frac{x}{y_i})\right)=(y_1,...,y_k)^a\prod_{i=1}^k((\frac{x}{y_i})^a-1).
\end{align*}

This completes the proof of the lemma.
\end{proof}

\begin{defn}\label{def4.3}
For $S=\{x_1,...,x_n\}$ and a positive integer $a$, we define the matrix $B_S$ whose $(i,j)$-entry is
$$
\begin{cases}
\mu_{x_j}(\frac{x_j}{x_i}), & \text{if } x_i|x_j,\\
0, & \text{otherwise},
\end{cases}
$$
and we define the matrix $[B_{S^a}]$ whose $(i,j)$-entry is
$$
\begin{cases}
\mu_{x_j}(\frac{x_j}{x_i})(\frac{x_j}{x_i})^a, & \text{if } x_i|x_j,\\
0, & \text{otherwise}.
\end{cases}
$$
\end{defn}

\begin{lem}\label{lem4.2}
The $(i,j)$-entry of $B_S^{-1}$ is
$$
\begin{cases}
1, & \text{if } x_i|x_j,\\
0, & \text{otherwise},
\end{cases}
$$
and the $(i,j)$-entry of $[B_{S^a}]^{-1}$ is
$$
\begin{cases}
(\frac{x_j}{x_i})^a, & \text{if } x_i|x_j,\\
0, & \text{otherwise}.
\end{cases}
$$
\end{lem}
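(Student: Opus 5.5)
We will verify that the proposed inverse is a one-sided inverse. Since all matrices are square over $\mathbb{Q}$, a one-sided inverse is automatically two-sided. Let $U=(u_{ij})$ with $u_{ij}=1$ if $x_i|x_j$ and $0$ otherwise. It is convenient to compute $UB_S$ rather than $B_SU$: the entry of $B_S$ in column $j$ uses the fixed function $\mu_{x_j}$, so
$$(UB_S)_{ij}=\sum_{x_i|x_k|x_j}\mu_{x_j}\Big(\frac{x_j}{x_k}\Big).$$
Writing $x=x_j$ and $d=x_k$, we must show that $\sum_{d\in S,\ x_i|d|x}\mu_x(x/d)$ equals $1$ if $x_i=x$ and $0$ otherwise. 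The second claim then follows from the same identity. Indeed, for $[B_{S^a}]$ times the proposed inverse taken in the same order,
$$\sum_{x_i|x_k|x_j}\Big(\frac{x_k}{x_i}\Big)^a\mu_{x_j}\Big(\frac{x_j}{x_k}\Big)\Big(\frac{x_j}{x_k}\Big)^a=\Big(\frac{x_j}{x_i}\Big)^a\sum_{x_i|x_k|x_j}\mu_{x_j}\Big(\frac{x_j}{x_k}\Big),$$
so both statements reduce to the one identity.

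To prove the identity, describe which $d$ contribute. Let $G_S(x)=\{y_1,\dots,y_k\}$. From the proof of Proposition \ref{pro4.1}, the numbers $x/y_l$ are pairwise coprime and exceed $1$. Hence distinct subsets $I\subseteq\{1,\dots,k\}$ give distinct products $\prod_{l\in I}x/y_l$, so $\mu_x$ is well defined and equals $(-1)^{|I|}$ at these products. Moreover
$$\frac{x}{\prod_{l\in I}x/y_l}=\frac{x}{\operatorname{lcm}_{l\in I}(x/y_l)}=(y_l)_{l\in I},$$
the gcd of the $y_l$ with $l\in I$, with $d=x$ when $I=\emptyset$. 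Since $S$ is gcd closed, each such $d$ lies in $S$, in agreement with Corollary \ref{cor4.1}. Therefore the nonzero terms are exactly $d_I=(y_l)_{l\in I}$ for $I\subseteq\{1,\dots,k\}$, each with coefficient $(-1)^{|I|}$. This description also covers $g(x)=1$, where $I\in\{\emptyset,\{1\}\}$, and $g(x)=0$, where only $d=x$ occurs.

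Now $x_i|d_I$ holds iff $x_i|y_l$ for every $l\in I$, that is, iff $I\subseteq J:=\{l: x_i|y_l\}$. So the sum equals $\sum_{I\subseteq J}(-1)^{|I|}$, which is $0$ unless $J=\emptyset$ and is $1$ when $J=\emptyset$. It remains to check that $J=\emptyset$ forces $x_i=x$. If $x_i$ is a proper divisor of $x$ lying in $S$, then $x_i$ divides some greatest-type divisor of $x$ in $S$: choose a maximal element of $S$ under divisibility among the proper divisors of $x$ that are multiples of $x_i$. Hence $J\neq\emptyset$. Thus $UB_S=I$ and $U[B_{S^a}]'=I$, where $[B_{S^a}]'$ denotes the proposed inverse matrix, which gives the lemma.

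The main point requiring care is the middle step: identifying the support of $\mu_x(x/d)$ with the gcds $d_I$, and knowing that distinct $I$ give distinct $d$. Both rest on the pairwise coprimality of the $x/y_l$ established in the proof of Proposition \ref{pro4.1}. Once this is in place, the remaining computation is the standard inclusion–exclusion cancellation.
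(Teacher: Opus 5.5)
Your proposal is correct and follows the paper's proof. Both compute the entries of $UB_S$ (the paper's $B_S^{-1}B_S$), pull out the factor $(x_j/x_i)^a$ to reduce the weighted case to the same sum, and evaluate $\sum_{x_i|x_k|x_j}\mu_{x_j}(x_j/x_k)$ as $\prod_{l\in J}\bigl(1+\mu_{x_j}(x_j/y_l)\bigr)=\sum_{I\subseteq J}(-1)^{|I|}$, where $J$ indexes the greatest-type divisors of $x_j$ that are multiples of $x_i$.

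You also justify what the paper leaves implicit: that the support of $\mu_x(x/\cdot)$ is exactly the gcds $d_I$, that distinct $I$ give distinct $d_I$, and that $J\neq\emptyset$ when $x_i$ is a proper divisor of $x_j$. Two small slips remain. Your final line should read $[B_{S^a}]'[B_{S^a}]=I$ rather than $U[B_{S^a}]'=I$. The claim ``$x_i|d_I$ iff $I\subseteq J$'' presupposes $x_i|x_j$, so the case $x_i\nmid x_j$, where the sum is empty, should be set aside first.
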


\begin{proof}
We compute the $(i,j)$-entry of $B_S^{-1}B_S$:
$$\sum_{x_i|x_k|x_j}\mu_{x_j}(\frac{x_j}{x_k}).$$

Assume that all integers that divide $x_j$ and are greatest-type divisors of $x_j$ and are multiples of $x_i$ are $y_1,...,y_l$. Then
$$\sum_{x_i|x_k|x_j}\mu_{x_j}(\frac{x_j}{x_k})=\prod_{k=1}^l(1+\mu_{x_j}(\frac{x_j}{y_k}))=0,$$
where the last equality holds when $x_j\ne x_i$.

For the $(i,j)$-entry of $[B_{S^a}]^{-1}[B_{S^a}]$,
$$\sum_{x_i|x_k|x_j}\mu_{x_j}(\frac{x_j}{x_k})(\frac{x_j}{x_k})^a(\frac{x_k}{x_i})^a
=(\frac{x_j}{x_i})^a\sum_{x_i|x_k|x_j}\mu_{x_j}(\frac{x_j}{x_k}),$$
and the remaining proof is similar to that for $B_S^{-1}B_S$.
\end{proof}

Now we can consider the divisibility properties of $((x_i,x_j)^a)$ and $([x_i,x_j]^a)$.

\begin{thm}\label{thm4.1}
Let $S=\{x_1,...,x_n\}$ be a gcd closed set satisfying condition $\mathcal{G}$, and let $a$ be a positive integer. For each $x_i$, let $G_S(x_i)=\{y^i_1,...,y^i_{l_i}\}$. Then
\begin{align*}
B_S^T((x_i,x_j)^a)B_S&=diag(1,(y^2_1,...,y^2_{l_2})^a\prod_{h=1}^{l_2}((\frac{x_2}{y^2_h})^a-1),...,(y^n_1,...,y^n_{l_n})^a\prod_{h=1}^{l_n}((\frac{x_n}{y^n_h})^a-1)),\\
[B_{S^a}]^T([x_i,x_j]^a)[B_{S^a}]&=diag(1,x_2^a\prod_{h=1}^{l_2}(1-(\frac{x_2}{y^2_h})^a),...,x_n^a\prod_{h=1}^{l_n}(1-(\frac{x_n}{y^n_h})^a)).
\end{align*}
\end{thm}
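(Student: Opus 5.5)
The plan is to copy the two-step computation used for Theorem \ref{thm1.1} and Theorem \ref{thm1.2} in Section 2. In that argument, Lemma \ref{lem2.1} and Corollary \ref{cor2.1} are replaced by Lemma \ref{lem4.1}, and $\mu$ is replaced by the imitative M\H{o}bius functions $\mu_{x_j}$ of Definition \ref{def4.3}. Throughout, the convention $(x_1,\dots,x_n)=1=x_1$ handles the first diagonal entry: $x_1$ has no greatest-type divisors, so its entry is $1$.

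\textbf{Step 1 (first product).} For the GCD matrix, compute $((x_i,x_j)^a)B_S=(a_{ij})$ with
$$a_{ij}=\sum_{x_k\mid x_j,\ x_k\in S}\mu_{x_j}\Big(\frac{x_j}{x_k}\Big)(x_i,x_k)^a .$$
This is exactly the sum in Lemma \ref{lem4.1} with $x=x_j$ and $h=x_i$. Hence $a_{ij}=D_j:=(y^j_1,\dots,y^j_{l_j})^a\prod_h((x_j/y^j_h)^a-1)$ if $x_j\mid x_i$, and $a_{ij}=0$ otherwise.

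\textbf{Step 2 (second product).} Multiply on the left by $B_S^T$. The $(i,j)$-entry becomes
$$\sum_{x_j\mid x_k\mid x_i}\mu_{x_i}\Big(\frac{x_i}{x_k}\Big)D_j=D_j\sum_{x_j\mid x_k\mid x_i}\mu_{x_i}\Big(\frac{x_i}{x_k}\Big).$$
The inner sum is precisely the quantity shown in the proof of Lemma \ref{lem4.2} to equal $\delta_{ij}$. This is the gcd-closed replacement for $\sum_{d\mid m}\mu(d)=[m=1]$, and it gives the first identity.

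\textbf{LCM identity.} Write $[x_i,x_k]^a=x_i^ax_k^a(x_i,x_k)^{-a}$ as in Section 2, so the $(i,j)$-entry of $([x_i,x_j]^a)[B_{S^a}]$ is
$$(x_ix_j)^a\sum_{x_k\mid x_j}\mu_{x_j}\Big(\frac{x_j}{x_k}\Big)(x_i,x_k)^{-a}.$$
Lemma \ref{lem4.1} is stated for positive $a$. However, both its pairing argument for $x_j\nmid x_i$ and its product expansion for $x_j\mid x_i$ hold verbatim for exponent $-a$; alternatively, apply the lemma to the completely multiplicative function $t\mapsto t^{-a}$. When $x_j\mid x_i$ this gives
$$(x_ix_j)^a(y^j_1,\dots,y^j_{l_j})^{-a}\prod_h\big((y^j_h/x_j)^a-1\big).$$
Proposition \ref{pro4.1} gives $x_j=(y^j_1,\dots,y^j_{l_j})\prod_h x_j/y^j_h$. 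Using it, this simplifies to $x_i^a\prod_h\big(1-(x_j/y^j_h)^a\big)$; otherwise the entry is $0$. For the second multiplication by $[B_{S^a}]^T$, the $(i,j)$-entry becomes
$$\sum_{x_j\mid x_k\mid x_i}\mu_{x_i}\Big(\frac{x_i}{x_k}\Big)\Big(\frac{x_i}{x_k}\Big)^a x_k^a\prod_h\Big(1-\big(\tfrac{x_j}{y^j_h}\big)^a\Big)=x_i^a\prod_h\Big(1-\big(\tfrac{x_j}{y^j_h}\big)^a\Big)\sum_{x_j\mid x_k\mid x_i}\mu_{x_i}\Big(\frac{x_i}{x_k}\Big).$$
The same vanishing sum from Lemma \ref{lem4.2} then yields the diagonal $x_j^a\prod_h(1-(x_j/y^j_h)^a)$.

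\textbf{Main obstacle.} The delicate point is justifying that the sum $\sum_{x_j\mid x_k\mid x_i}\mu_{x_i}(x_i/x_k)$ vanishes for $i\ne j$. It needs the divisors $x_k$ of $x_i$ in $S$ with $\mu_{x_i}(x_i/x_k)\ne0$ to be exactly the gcds of subfamilies of $G_S(x_i)$, and needs $x_j\mid x_k$ to mean that $x_k$ is the gcd of a subfamily of those $y$'s that are multiples of $x_j$. This relies on Corollary \ref{cor4.1} and Proposition \ref{pro4.1}: the cofactors $x_i/y_h$ are pairwise coprime, so $x_k=x_i/\prod_{h\in I}(x_i/y_h)$ determines $I$. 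If $x_j\ne x_i$, then $x_j$ lies below some greatest-type divisor of $x_i$, so the set $\{h: x_j\mid y_h\}$ is nonempty. The sum then factors as $\prod(1+(-1))=0$. I would spell this out carefully, since Lemma \ref{lem4.2} only sketches it.

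A second check is needed in the $x\nmid h$ case of Lemma \ref{lem4.1}. The paired term $\frac{x}{y_1}d$ must lie in $S$, and $d$ must range over all subfamily gcds avoiding $y_1$. This again follows from Corollary \ref{cor4.1}, which I would invoke explicitly.
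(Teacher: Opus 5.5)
Your proposal is correct and is essentially the paper's own argument. The paper's proof of Theorem~\ref{thm4.1} just says to rerun the two-step computation of Section 2 with $p\mid x_i$ replaced by the cofactors $x_i/y^i_h$, which is what you carry out via Lemma~\ref{lem4.1} (also with exponent $-a$), Proposition~\ref{pro4.1}, and the vanishing sum $\sum_{x_j\mid x_k\mid x_i}\mu_{x_i}(x_i/x_k)=\delta_{ij}$ from Lemma~\ref{lem4.2}. Your version also makes explicit the points the paper leaves implicit: the negative exponent, the pairwise-coprime cofactors, and the nonemptiness of $\{h: x_j\mid y^i_h\}$.
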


\begin{proof}
Similar to the proof of Theorem \ref{thm1.2} and Corollary \ref{cor3.1}, we replace $p|x_i$ with $\frac{x_i}{y^i_j}$.
\end{proof}

Now we can prove the following result:

\begin{thm}\label{thm4.2}
Let $S=\{x_1,...,x_n\}$ be a gcd closed set satisfying condition $\mathcal{G}$, and let $a,b$ be positive integers with $a\mid b$. Then
$$((x_i,x_j)^a)|((x_i,x_j)^b),\quad ((x_i,x_j)^a)|([x_i,x_j])^a,\quad \text{and}\quad ([x_i,x_j])^a|([x_i,x_j])^b.$$
\end{thm}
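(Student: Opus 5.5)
Follow the template of the proof of Theorem \ref{thm3.1}, using Theorem \ref{thm4.1} for the diagonal forms and Lemma \ref{lem4.2} for the integral inverses. Write $z_i=(y^i_1,\dots,y^i_{l_i})$ and $q^i_h=x_i/y^i_h$. By Proposition \ref{pro4.1}, $x_i=z_i\prod_h q^i_h$ with the $q^i_h$ pairwise coprime. Put
\[
D_a=\mathrm{diag}\Bigl(z_i^a\prod_h((q^i_h)^a-1)\Bigr),\qquad E_a=\mathrm{diag}\Bigl(x_i^a\prod_h(1-(q^i_h)^a)\Bigr).
\]
Theorem \ref{thm4.1} then reads
\[
((x_i,x_j)^a)=(B_S^T)^{-1}D_aB_S^{-1},\qquad ([x_i,x_j]^a)=([B_{S^a}]^T)^{-1}E_a[B_{S^a}]^{-1}.
\]
All four matrices $B_S$, $B_S^{-1}$, $[B_{S^a}]$, $[B_{S^a}]^{-1}$ are integral: the first pair and the third have entries $0,\pm1$ or $\pm$ integer powers, and the inverses are given by Lemma \ref{lem4.2}. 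Hence each divisibility reduces to showing that a middle matrix is integral.

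\textbf{Step 1.} We have $((x_i,x_j)^a)^{-1}((x_i,x_j)^b)=B_S D_a^{-1}D_bB_S^{-1}$. We need $z_i^a\prod_h((q^i_h)^a-1)$ to divide $z_i^b\prod_h((q^i_h)^b-1)$. This holds factorwise, since $a\mid b$ gives $z^a\mid z^b$ and $q^a-1\mid q^b-1$.

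\textbf{Step 2.} We have $((x_i,x_j)^a)^{-1}([x_i,x_j]^a)=B_S\,H\,[B_{S^a}]^{-1}$ with
\[
H=D_a^{-1}B_S^T([B_{S^a}]^T)^{-1}E_a.
\]
By Lemma \ref{lem4.2}, the $(i,j)$ entry of $B_S^T([B_{S^a}]^T)^{-1}$ is
\[
g_{ij}=\sum_{x_j|x_k|x_i}\mu_{x_i}\Bigl(\frac{x_i}{x_k}\Bigr)\Bigl(\frac{x_k}{x_j}\Bigr)^a.
\]
I would evaluate this as in Lemma \ref{lem4.2}: the $x_k$ occurring with nonzero $\mu_{x_i}$ are $x_i$ divided by products of the $q^i_h$. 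Requiring $x_j\mid x_k$ restricts to those $h$ with $x_j\mid y^i_h$; call this set $T$. Using Corollary \ref{cor4.1} and coprimality, the sum factors as
\[
g_{ij}=\Bigl(\frac{x_i}{x_j}\Bigr)^a\prod_{h\in T}\bigl(1-(q^i_h)^{-a}\bigr).
\]
Then $H_{ij}=g_{ij}E_{jj}/D_{ii}$, and one must show that $z_i^a\prod_h((q^i_h)^a-1)$ divides
\[
x_i^a\prod_{h\in T}\bigl(1-(q^i_h)^{-a}\bigr)\prod_{h'}\bigl(1-(q^j_{h'})^a\bigr).
\]
The factors $(q^i_h)^a-1$ with $h\in T$ cancel against the numerator after clearing $(q^i_h)^{-a}$. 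For $h\notin T$, $x_j\nmid y^i_h$ forces $q^i_h$ to be one of the $q^j_{h'}$, because $x_j$ sits in the lattice generated by $G_S(x_i)$ and Corollary \ref{cor4.1} identifies the greatest-type divisors along the way. This supplies the factor $(q^i_h)^a-1$ from the $E_{jj}$ product. The power $z_i^a$ is covered by $x_i^a/\prod_{h\in T}(q^i_h)^a$.

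\textbf{Step 3.} We have $([x_i,x_j]^a)^{-1}([x_i,x_j]^b)=[B_{S^a}]^T H'[B_{S^b}]^{-1}$, with $H'_{ij}=E_a(i)^{-1}h_{ij}E_b(j)$, where
\[
h_{ij}=\sum_{x_j|x_k|x_i}\mu_{x_i}\Bigl(\frac{x_i}{x_k}\Bigr)\Bigl(\frac{x_i}{x_k}\Bigr)^a\Bigl(\frac{x_k}{x_j}\Bigr)^b.
\]
By the same factorisation this equals $(x_i/x_j)^a\,(x_i/x_j)^{b-a}\prod_{h\in T}\bigl(1-(q^i_h)^{a-b}\bigr)$. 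Divisibility then follows as at the end of the proof of Theorem \ref{thm3.1}: $q^a-1\mid q^b-1$ and $q^a-1\mid q^{b-a}-1$, and $x_i^a$ divides the accumulated powers.

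The main obstacle is the structural claim used in Steps 2 and 3. For $x_j\mid x_i$, the interval $\{x_k\in S: x_j\mid x_k\mid x_i,\ \mu_{x_i}(x_i/x_k)\neq0\}$ must be a Boolean lattice indexed by $T$. Moreover, each $q^i_h$ with $h\notin T$ must appear among the $q^j_{h'}$ (the FC analogue of "$p\mid x_i/x_j$ or $p\mid x_j$"). I would prove this by induction along the chain $x_j\mid\cdots\mid x_i$ of greatest-type divisors, using Corollary \ref{cor4.1} at each step.
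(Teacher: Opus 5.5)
Your reduction is the paper's. You use the same factorizations from Theorem~\ref{thm4.1} and Lemma~\ref{lem4.2}, the same subset-sum evaluation of $g_{ij}$ and $h_{ij}$, and the same final bookkeeping. The Boolean-lattice half of your ``main obstacle'' needs no induction. By the pairwise coprimality in Proposition~\ref{pro4.1} and gcd-closure, the $x_k$ with $\mu_{x_i}(x_i/x_k)\neq 0$ are exactly $\gcd(y^i_h:h\in U)=x_i/\prod_{h\in U}q^i_h$, and $x_j$ divides this iff $U\subseteq T$. (In Step 3 the left factor should be $[B_{S^a}]$, not its transpose; this is harmless.)

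The genuine gap is the other half, which is the one step that is not a formal transcription of Theorem~\ref{thm3.1}. For $y\in G_S(x_i)$ with $x_j\nmid y$ you need $x_i/y=x_j/y'$ for some $y'\in G_S(x_j)$, and your justification fails on two counts. First, $x_j$ need not lie in the gcd-lattice of $G_S(x_i)$: for $S$ the divisors of $12$, $x_i=12$, $x_j=3$, that lattice is $\{12,6,4,2\}$. Second, Corollary~\ref{cor4.1} is only true with $\supseteq$: in the same example $G_S(6)=\{2,3\}$, not $\{(6,4)\}=\{2\}$. So an induction ``using Corollary~\ref{cor4.1} at each step'' would not even reach $x_j=3$ along $3\mid 6\mid 12$.

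The missing idea, which is the heart of the paper's proof, is to take $y'=(x_j,y)$. The paper shows $[x_j,y]=x_i$, so that $x_j/(x_j,y)=x_i/y$, and then, by contradiction using condition $\mathcal{G}$, that $(x_j,y)\in G_S(x_j)$. Its one-line reason for $[x_j,y]=x_i$ tacitly treats $[x_j,y]$ as an element of $S$, which gcd-closure does not guarantee.

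A clean way to obtain both facts, in the spirit of your chain induction but using only condition $\mathcal{G}$, is induction on $x_i$:
\begin{itemize}
\item If $x_j=x_i$ there is nothing to prove.
\item Otherwise pick $y''\in G_S(x_i)$ with $x_j\mid y''$, for instance a maximal element of $\{d\in S: x_j\mid d\mid x_i,\ d\neq x_i\}$. Then $y''\neq y$.
\item Condition $\mathcal{G}$ gives $w=(y,y'')\in G_S(y'')$ with $y''/w=[y,y'']/y=x_i/y$.
\item Moreover $x_j\nmid w$ and $(x_j,w)=(x_j,y)$. So the induction hypothesis for $y''$ yields $(x_j,y)\in G_S(x_j)$ and $x_j/(x_j,y)=x_i/y$.
\end{itemize}
Since the $q^i_h$ are pairwise coprime and exceed $1$, this gives the injection $h\mapsto h'$ you need in Steps 2 and 3. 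The rest of your argument then goes through.
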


\begin{proof}
We begin by computing
\begin{align*}
((x_i,x_j)^a)^{-1}((x_i,x_j)^b)=& B_Sdiag(1,(y^2_1,...,y^2_{l_2})^{b-a}\prod_{h=1}^{l_2}\frac{((\frac{x_2}{y^2_h})^b-1)}{((\frac{x_2}{y^2_h})^a-1)},...,\\
&(y^n_1,...,y^n_{l_n})^{b-a}\prod_{h=1}^{l_n}\frac{((\frac{x_n}{y^n_h})^b-1)}{((\frac{x_n}{y^n_h})^a-1)})B_S^{-1}.
\end{align*}

Next,
\begin{align*}
((x_i,x_j)^a)^{-1}([x_i,x_j])^a=& B_Sdiag(1,(y^2_1,...,y^2_{l_2})^{-a}\prod_{h=1}^{l_2}\frac{1}{((\frac{x_2}{y^2_h})^a-1)},...,\\
&(y^n_1,...,y^n_{l_n})^{-a}\prod_{h=1}^{l_n}\frac{1}{((\frac{x_n}{y^n_h})^a-1)})B_S^T\\
&([B_{S^a}]^T)^{-1}diag(1,x_2^a\prod_{h=1}^{l_2}(1-(\frac{x_2}{y^2_h})^a),...,x_n^a\prod_{h=1}^{l_n}(1-(\frac{x_n}{y^n_h})^a))[B_{S^a}]^{-1}.
\end{align*}

The $(i,j)$-entry of $B_S^T([B_{S^a}]^T)^{-1}$ is
$$\sum_{x_j|x_k|x_i}\mu_{x_i}(\frac{x_i}{x_k})(\frac{x_k}{x_j})^a=(x_j)^{-a}(y^i_{l'_1},...,y^i_{l'_{m_{ij}}})^a\prod_{h=1}^{m_{ij}}((\frac{x_i}{y^i_{l'_h}})^a-1),$$
where $x_j|(y^i_{l'_1},...,y^i_{l'_{m_{ij}}})$, but for every $k\not\in \{l'_1,...,l'_{m_{ij}}\}$, we have $x_j\nmid y^i_k$ and $\{l'_1,...,l'_{m_{ij}}\}\subseteq \{1,...,l_i\}$.

Thus the $(i,j)$-entry of
\begin{align*}
&diag(1,(y^2_1,...,y^2_{l_2})^{-a}\prod_{h=1}^{l_2}\frac{1}{((\frac{x_2}{y^2_h})^a-1)},...,(y^n_1,...,y^n_{l_n})^{-a}\prod_{h=1}^{l_n}\frac{1}{((\frac{x_n}{y^n_h})^a-1)})B_S^T\\
&([B_{S^a}]^T)^{-1}diag(1,x_2^a\prod_{h=1}^{l_2}(1-(\frac{x_2}{y^2_h})^a),...,x_n^a\prod_{h=1}^{l_n}(1-(\frac{x_n}{y^n_h})^a))
\end{align*}
is
$$\frac{\prod_{h=1}^{l_j}(1-(\frac{x_j}{y^j_h})^a)(y^i_{l'_1},...,y^i_{l'_{m_{ij}}})^a\prod_{h'=1}^{m_{ij}}((\frac{x_i}{y^i_{l'_{h'}}})^a-1)}{(y^i_1,...,y^i_{l_i})^a\prod_{h''=1}^{l_i}((\frac{x_i}{y^i_{h''}})^a-1)}.$$

Since $\{y^i_{l'_1},...,y^i_{l'_{m_{ij}}}\}\subseteq G_S(x_i)$, we have $(y^i_1,...,y^i_{l_i})\mid(y^i_{l'_1},...,y^i_{l'_{m_{ij}}})$.

Consider $y\in G_S(x_i)-\{y^i_{l'_1},...,y^i_{l'_{m_{ij}}}\}$. Because $x_j|x_i$ and $x_j\nmid y$, we have $y|[x_j,y]|x_i$ and $y\ne [x_j,y]$, so $[x_j,y]=x_i$.

Therefore, $\frac{x_jy}{(x_j,y)}=x_i$, which implies $\frac{x_j}{(x_j,y)}=\frac{x_i}{y}$.

Next we prove that $(x_j,y)$ is a greatest-type divisor of $x_j$.

If not, since $x_j\nmid y$, we have $(x_j,y)\ne x_j$. Then there exists $y'\in G_S(x_j)$ such that $(x_j,y)|y'$ and $(x_j,y)\ne y'$. But $(y',y)|(x_j,y)$, so $[y',y]=\frac{y'y}{(y',y)}>y$, and hence $[y',y]=x_i$. Then $\frac{x_j}{(x_j,y)}=\frac{y'}{(y',y)}$, so $x_j=\frac{y'(x_j,y)}{(y',y)}$, i.e., $x_j=[y',(x_j,y)]=y'$, a contradiction. Thus $y'=(x_j,y)$.

Consequently, $\frac{x_j}{y'}=\frac{x_i}{y}$. Hence,
$$\prod_{y\in G_S(x_i)-\{y^i_{l'_1},...,y^i_{l'_{m_{ij}}}\}}((\frac{x_i}{y})^a-1)\mid\prod_{h=1}^{l_j}(1-(\frac{x_j}{y^j_h})^a),$$
that is,
\[\prod_{h''=1}^{l_i}((\frac{x_i}{y^i_{h''}})^a-1)\mid\prod_{h=1}^{l_j}(1-(\frac{x_j}{y^j_h})^a)\prod_{h'=1}^{m_{ij}}((\frac{x_i}{y^i_{l'_{h'}}})^a-1).\tag{4.1}\]

Therefore,
\begin{align*}
&diag(1,(y^2_1,...,y^2_{l_2})^{-a}\prod_{h=1}^{l_2}\frac{1}{((\frac{x_2}{y^2_h})^a-1)},...,(y^n_1,...,y^n_{l_n})^{-a}\prod_{h=1}^{l_n}\frac{1}{((\frac{x_n}{y^n_h})^a-1)})B_S^T\\
&([B_{S^a}]^T)^{-1}diag\{1,x_2^a\prod_{h=1}^{l_2}(1-(\frac{x_2}{y^2_h})^a),...,x_n^a\prod_{h=1}^{l_n}(1-(\frac{x_n}{y^n_h})^a)\}\in M_n(\mathbb{Z}).
\end{align*}

Finally, we compute
\begin{align*}
([x_i,x_j]^a)^{-1}([x_i,x_j]^b)=& [B_{S^a}]diag\{1,\frac{1}{x_2^a\prod_{h=1}^{l_2}(1-(\frac{x_2}{y^2_h})^a)},...,\frac{1}{x_n^a\prod_{h=1}^{l_n}(1-(\frac{x_n}{y^n_h})^a)}\}[B_{S^a}]^T\\
&([B_{S^b}]^T)^{-1}diag\{1,x_2^b\prod_{h=1}^{l_2}(1-(\frac{x_2}{y^2_h})^b),...,x_n^b\prod_{h=1}^{l_n}(1-(\frac{x_n}{y^n_h})^b)\}[B_{S^b}]^{-1}.
\end{align*}

The $(i,j)$-entry of $[B_{S^a}]^T([B_{S^b}]^T)^{-1}$ is:
\begin{align*}
&\sum_{x_j|x_k|x_i}\mu_{x_i}(\frac{x_i}{x_k})(\frac{x_i}{x_k})^a(\frac{x_k}{x_j})^b=\frac{x_i^a}{x_j^b}\sum_{x_j|x_k|x_i}\mu_{x_i}(\frac{x_i}{x_k})x_k^{b-a}\\
=&\frac{x_i^a}{x_j^b}(y_{l'_1}^i,...,y_{l'_{m_{ij}}}^i)^{b-a}\prod_{h=1}^{m_{ij}}((\frac{x_i}{y_{l'_h}^i})^{b-a}-1).
\end{align*}

Hence the $(i,j)$-entry of
\begin{align*}
&diag(1,\frac{1}{x_2^a\prod_{h=1}^{l_2}(1-(\frac{x_2}{y^2_h})^a)},...,\frac{1}{x_n^a\prod_{h=1}^{l_n}(1-(\frac{x_n}{y^n_h})^a)})[B_{S^a}]^T\\
&([B_{S^b}]^T)^{-1}diag(1,x_2^b\prod_{h=1}^{l_2}(1-(\frac{x_2}{y^2_h})^b),...,x_n^b\prod_{h=1}^{l_n}(1-(\frac{x_n}{y^n_h})^b))
\end{align*}
is
$$\frac{\prod_{h=1}^{l_j}(1-(\frac{x_j}{y^j_h})^b)(y_{l'_1}^i,...,y_{l'_{m_{ij}}}^i)^{b-a}\prod_{h'=1}^{m_{ij}}((\frac{x_i}{y_{l'_{h'}}}^i)^{b-a}-1)}{\prod_{{h''}=1}^{l_i}(1-(\frac{x_i}{y^i_{h''}})^a)}.$$

By the same reasoning as (4.1), we have
$$\prod_{h=1}^{l_i}(1-(\frac{x_i}{y^i_h})^a)\mid\prod_{h=1}^{l_j}(1-(\frac{x_j}{y^j_h})^b)\prod_{h'=1}^{m_{ij}}((\frac{x_i}{y_{l'_{h'}}}^i)^{b-a}-1).$$

Therefore,
\begin{align*}
&diag(1,\frac{1}{x_2^a\prod_{h=1}^{l_2}(1-(\frac{x_2}{y^2_h})^a)},...,\frac{1}{x_n^a\prod_{h=1}^{l_n}(1-(\frac{x_n}{y^n_h})^a)})B_a^T\\
&(B_b^T)^{-1}diag(1,x_2^b\prod_{h=1}^{l_2}(1-(\frac{x_2}{y^2_h})^b),...,x_n^b\prod_{h=1}^{l_n}(1-(\frac{x_n}{y^n_h})^b))\in M_n(\mathbb{Z}).
\end{align*}

This completes the proof of Theorem \ref{thm4.2}.
\end{proof}

\bibliographystyle{amsplain}

\end{document}